\documentclass[11pt]{amsart}
\usepackage{amsmath, amsthm, amssymb, latexsym,url}
\usepackage{hyperref}

\addtolength{\oddsidemargin}{-.5in}
\addtolength{\evensidemargin}{-.5in}
\addtolength{\textwidth}{1in}

\allowdisplaybreaks

\author{Joseph Vandehey}
\thanks{Email: \href{mailto:vandehey@uga.edu}{\nolinkurl{vandehey@uga.edu}}}
\title{Non-trivial matrix actions preserve normality for continued fractions}
\date{\today}

\newtheorem{thm}{Theorem}[section]

\newtheorem{lem}[thm]{Lemma}

\theoremstyle{remark}
\newtheorem{rem}[thm]{Remark}

\begin{document}

\begin{abstract}
A seminal result due to Wall states that if $x$ is normal to a given base $b$ then so is $rx+s$ for any rational numbers $r,s$ with $r\neq 0$. We show that a stronger result is true for normality with respect to the continued fraction expansion. In particular, suppose $a,b,c,d\in \mathbb{Z}$ with $ad-bc\neq 0$. Then if $x$ is continued fraction normal, so is $(ax+b)/(cx+d)$.
\end{abstract}

\maketitle

\section{Introduction}

A number $x\in  [0,1)$ with base $10$ expansion $x=0.a_1a_2a_3\dots$ is said to be normal (to base $10$) if for any finite string $s=[c_1,c_2,\dots,c_k]$ we have that 
\[
\lim_{n\to \infty} \frac{\#\{0\le i \le n: a_{i+j} = c_j, 1\le j \le k\}}{n} = \frac{1}{10^k}.
\]
Although almost all real numbers are normal\footnote{For readability's sake, we will no longer say ``to base $10$" every time. The results mentioned here would follow if we replaced $10$ by any integer base $b\ge 2$.}, we still do not know of a single commonly used mathematical constant, such as $\pi$, $e$, or $\sqrt{2}$, that is normal.

In his Ph.D. thesis under D.H. Lehmer, Donald Dines Wall \cite{Wall} proved a series of results on normal numbers which are now considered classical, elementary facts. Among them, Wall proved that if $x$ is normal then $qx+r$ is normal for any rational numbers $q,r$ with $q\neq 0$. Chang \cite{Chang} appears to have discovered this result independently, while Doty, Lutz, and Nandakumar \cite{DLN} knew of Wall's result and reproved it by a different method. Aistleitner \cite{Aistleitner} has given the only significant extension of Wall's result the author is aware of, showing that if $x$ is normal and $y\in\mathbb{R}$ is a number with almost all of its digits equal to $0$, then $x+ry$ is normal to base $10$ for any rational $r$. (See also \cite[pp. 97]{Bugeaud}.)

Although the definition of normality is easily extended to many other digital systems, questions about which operations preserve normality are still unanswered in most cases. Recently, the author, with Airey and Mance, \cite{VandeheyAireyMance} studied how rational multiplication and addition act for $Q$-Cantor series expansions.

However, in this paper, we shall be interested in normality for continued fraction expansions, which we shall abbreviate as CF-normality and define explicitly in a moment. Mend\`{e}s France first asked the question of which operations preserve CF-normality \cite[pp. 17--18]{Mauduit}. He actually asked a simpler question, namely if non-zero rational multiplication preserves simple normality\footnote{Simple normality asks that all of the one-digit strings appear with the correct limiting frequency.} for continued fractions. Yann Bugeaud extended the question to ask if non-zero rational multiplication preserved CF-normality \cite[Problem 10.56, pp. 222]{Bugeaud}. Part of the difficulty of proving such a result comes from the fact that rational multiplication and addition are operations that are not very well understood for continued fractions. Research on these topics appears to have come almost completely from a computational side (``Given a continued fraction expansion $x$, how can we quickly compute the continued fraction expansion of $qx+r$?"). Notable works include Gosper \cite{Gosper}, Raney \cite{Raney}, and Liardet and Stambul \cite{LS}. On the theoretical side (``If $x$ has a continued fraction expansion with property $Y$, does $qx+r$ have property $Z$?"), the author is unaware of any significant result.

We recall some standard definitions for continued fractions. The (regular) continued fraction expansion of a number $x\in \mathbb{R}$ is given by
\[
x=a_0+\cfrac{1}{a_1+\cfrac{1}{a_2+\dots}}, \text{ with } a_0 \in \mathbb{Z} \text{ and } a_i\in \mathbb{N} \text{ for } i\in \mathbb{N}.
\]
We will denote this expansion by $\langle a_0;a_1,a_2,\dots\rangle$ for typographical simplicity. This expansion is infinite if and only if $x$ is irrational. We will refer to the $n$th digit of the continued fraction expansion of $x$ by $a_n(x)$ or just $a_n$ if the choice of $x$ is clear. The Gauss map $T:[0,1)\to [0,1)$ given by 
\[
Tx = \begin{cases}\frac{1}{x}-\left\lfloor \frac{1}{x} \right\rfloor, & x \neq 0, \\ 0, & x=0,\end{cases}
\]
acts as a forward shift on the continued fraction digits, ignoring $a_0$. The Gauss measure $\mu$ given by
\[
\mu(A) = \int_A \frac{1}{(\log 2) (1+x)} \ dx,
\]
is a probability measure, preserved by $T$, and is ergodic with respect to $T$. Given a string $s=[c_1,c_2,\dots,c_k]$, we define the cylinder set of $s$ to be
\[
C_s = \{ x \in [0,1) : a_i(x)=c_i, 1\le i \le k\},
\]
and we say this cylinder set has rank $k$. We shall also need the usual matrix action on real numbers given by
\[
\left( \begin{array}{cc} \alpha & \beta \\ \gamma & \delta \end{array}\right) x = \frac{\alpha x  + \beta}{\gamma x + \delta}
\]

With these definitions in mind, we say that a point $x\in [0,1)$ is CF-normal if for any string $s$, we have
\begin{equation}\label{eq:cfnormdef}
\lim_{n\to \infty} \frac{\#\{0\le i < n : T^i x \in C_s\}}{n} = \mu(C_s).
\end{equation}
Since $T^i x \in C_s$ if and only if the string $s$ appears in the continued fraction expansion of $x$ starting at the $i+1$st position, the left-hand side of \eqref{eq:cfnormdef} represents the limiting frequency with which $s$ appears in the continued fraction expansion of $x$.
By the pointwise ergodic theorem, almost all numbers $x\in [0,1)$ are CF-normal. We extend the definition of CF-normal to all $x\in \mathbb{R}$ to say $x$ is CF-normal if $x-a_0(x)$ is CF-normal.

Our main result will be the following, which not only answers Bugeaud's question in the affirmative, but shows that any non-trivial linear fractional transformation preserves CF-normality.

\begin{thm}\label{thm:main}
Let $M$ be a $2 \times 2$ matrix with coefficients in $\mathbb{Z}$ and non-zero determinant. Let $x\in\mathbb{R}$ be CF-normal. Then $Mx$ is also CF-normal.
\end{thm}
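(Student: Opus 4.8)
The plan is to realise the map $x\mapsto Mx$ on continued-fraction expansions by a finite-state transducer of Gosper--Raney--Liardet--Stambul type \cite{Gosper,Raney,LS}, and then to show that such a transducer sends $\mu$-generic input sequences to $\mu$-generic output sequences. First I would reduce to the case $Mx\in[0,1)$ (replacing $M$ by $\bigl(\begin{smallmatrix}1&-a_0(Mx)\\0&1\end{smallmatrix}\bigr)M$) and, after absorbing a bounded block of partial quotients of $x$ into $M$ (this changes $x$ by finitely many applications of $T$ and so preserves CF-normality), assume $M$ is already in reduced form and that all matrices below have nonnegative entries. Write $D=|\det M|$ and $B_a=\bigl(\begin{smallmatrix}0&1\\1&a\end{smallmatrix}\bigr)$, so that $z=B_{a_1(z)}(Tz)$ and $Mx=(MB_{a_1}\cdots B_{a_k})(T^k x)$. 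Gosper's algorithm, reading $a_1,a_2,\dots$ one digit at a time, maintains a ``reduced'' state $R_k$ --- an integer matrix with $|\det R_k|=D$ and $Mx=(B_{b_1}\cdots B_{b_{\ell_k}})R_k(T^k x)$, where $b_1,b_2,\dots$ are the partial quotients of $Mx$ and no further digit can be emitted from $R_k$. I would take from \cite{Raney,LS} the two standard facts that $(i)$ the set $\mathcal S$ of reduced states reachable from $M$ is finite, and $(ii)$ the number $g(R,a)$ of digits emitted on reading a digit $a$ from $R\in\mathcal S$ is bounded by a constant $G=G(M)$ (reading a large $a$ pushes the state's image near the rational point $R(0)$, whose continued fraction is short, after which the algorithm settles in boundedly many steps). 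Hence $\ell_k$ grows linearly in $k$, consecutive $\ell_k$ differ by at most $G$, and for $\ell_{k-1}\le n<\ell_k$ one has $T^n(Mx)=\widetilde S_{\,n-\ell_{k-1}}(R_{k-1},a_k)(T^k x)$ for explicit matrices $\widetilde S_j(R,a)$ of determinant $\pm D$ (in particular non-constant M\"obius maps), with $\widetilde S_0(R,a)=RB_a$.

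The heart of the proof would be an averaging argument on the skew product $F\colon[0,1)\times\mathcal S\to[0,1)\times\mathcal S$, $F(z,R)=(Tz,\phi(R,a_1(z)))$, where $\phi$ is the transition map; thus $F^k(x,R_0)=(T^k x,R_k)$. For bounded $f$ on $[0,1)$ put $\widetilde\Psi_f(z,R)=\sum_{j=0}^{g(R,a_1(z))-1}f\big(\widetilde S_j(R,a_1(z))(Tz)\big)$ and $\widetilde g(z,R)=g(R,a_1(z))$, so that
\[
\sum_{n<\ell_K}f(T^n(Mx))=\sum_{k<K}\widetilde\Psi_f\big(F^k(x,R_0)\big),\qquad \ell_K=\sum_{k<K}\widetilde g\big(F^k(x,R_0)\big).
\]
Since $x$ is CF-normal, $\tfrac1K\sum_{k<K}\delta_{T^k x}\to\mu$; hence the measures $\tfrac1K\sum_{k<K}\delta_{F^k(x,R_0)}$ on $[0,1)\times\mathcal S$ are tight, and every weak-$*$ subsequential limit $\nu$ is an $F$-invariant probability measure whose first marginal is $\mu$. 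As $g\le G$, both $\widetilde g$ and $\widetilde\Psi_f$ are bounded and continuous off a $\nu$-null set, so along any subsequence realising $\nu$ one has $\ell_K/K\to\int\widetilde g\,d\nu=:c_\nu\in(0,\infty)$ and $\tfrac1K\sum_{k<K}\widetilde\Psi_f(F^k(x,R_0))\to\int\widetilde\Psi_f\,d\nu$.

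It then remains to compute $\int\widetilde\Psi_f\,d\nu$, and here I would avoid identifying $\nu$ itself. Let $\mathrm{Out}(\nu)=c_\nu^{-1}\int\big(\sum_{j=0}^{g(R,a_1(z))-1}\delta_{\widetilde S_j(R,a_1(z))(Tz)}\big)\,d\nu(z,R)$, a probability measure on $[0,1)$, so $\int\widetilde\Psi_f\,d\nu=c_\nu\,\mathrm{Out}(\nu)(f)$. Using $T(B_b y)=y$ for $y\in[0,1)$, together with the boundary identities $\widetilde S_0(R,a)(Tz)=R(z)$ and $\widetilde S_{g(R,a)}(R,a)(Tz)=\phi(R,a)(Tz)$ (the latter under the natural convention $\widetilde S_{g(R,a)}(R,a)=\phi(R,a)$), a one-line telescoping shows $T_*\mathrm{Out}(\nu)-\mathrm{Out}(\nu)=c_\nu^{-1}\big(\,(\pi\circ F)_*\nu-\pi_*\nu\,\big)=0$, where $\pi(z,R)=R(z)$: so $\mathrm{Out}(\nu)$ is $T$-invariant precisely because $F_*\nu=\nu$. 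Moreover $\mathrm{Out}(\nu)$ is absolutely continuous with respect to Lebesgue measure, being a countable sum (over triples $(R,a,j)$) of finite-mass push-forwards of restrictions of $\mu$ under compositions of a branch of $T$ with a non-constant M\"obius map. Since the Gauss measure $\mu$ is the unique $T$-invariant probability measure absolutely continuous with respect to Lebesgue measure (by its ergodicity), $\mathrm{Out}(\nu)=\mu$ and so $\int\widetilde\Psi_f\,d\nu=c_\nu\int f\,d\mu$. Consequently, along any such subsequence,
\[
\frac1{\ell_K}\sum_{n<\ell_K}f(T^n(Mx))=\frac{K}{\ell_K}\cdot\frac1K\sum_{k<K}\widetilde\Psi_f\big(F^k(x,R_0)\big)\ \longrightarrow\ \frac1{c_\nu}\cdot c_\nu\int f\,d\mu=\int f\,d\mu,
\]
a limit independent of the subsequence; since consecutive $\ell_K$ differ by at most $G$ this upgrades to $\tfrac1N\sum_{n<N}f(T^n(Mx))\to\int f\,d\mu$ for all bounded, a.e.-continuous $f$, and $f=\mathbf{1}_{C_s}$ then gives CF-normality of $Mx$.

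I expect the main obstacle to be exactly this averaging step, and in particular the strong temptation to prove instead that the decorated orbit $(T^k x,R_k)$ equidistributes to a single, explicitly understood measure. It need not: the transducer is generally not synchronising --- reading long, or large, or highly structured blocks of partial quotients does not erase the dependence of $R_k$ on the remote past --- so there is no coupling that reduces $R_k$ to a function of boundedly many recent digits, and the limiting measure $\nu$ genuinely depends on $x$ and is opaque. The point is that one never needs $\nu$: every subsequential limit is $F$-invariant with first marginal $\mu$, and then absolute continuity together with uniqueness of the Gauss measure among absolutely continuous invariant measures forces $\mathrm{Out}(\nu)=\mu$. The remaining work --- verifying the two transducer inputs ($\mathcal S$ finite, $g\le G$), pinning down the exact form of the matrices $\widetilde S_j$, and the measure-theoretic bookkeeping behind the absolute continuity of $\mathrm{Out}(\nu)$ --- should be handled by the standard Gosper/Raney estimates and an elementary change of variables.
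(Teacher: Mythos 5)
Your proposal is correct in outline and takes a genuinely different route from the paper. Both proofs rest on the same transducer skeleton: a finite set of reduced states, a uniform bound on the number of output digits per input digit (the paper's Lemmas \ref{lem:transition} and \ref{lem:transitionlength}), and the skew product $F(z,R)=(Tz,\phi(R,a_1(z)))$, which is the paper's $\widetilde T$. Where you part ways is in how the limiting frequency of a given output string is identified. The paper first restricts to a transitive component of the state graph (Lemma \ref{lem:transitivecomponent}), constructs an explicit $\widetilde T$-invariant ergodic probability $\rho$ equivalent to $\widetilde\mu$ via a R\'enyi-type estimate and a time-inhomogeneous Markov-chain stability result (Lemmas \ref{lem:bigrenyi} and \ref{lem:markovchain}), proves with the Pyatetski\u\i-Shapiro criterion that the decorated orbit $(T^k x, R_k)$ actually equidistributes with respect to $\rho$ (Theorem \ref{thm:traversing}), and then counts ``trigger strings'' to get the frequencies; the constant of proportionality is pinned down only at the very end by a soft positive-measure argument. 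You instead pass to an arbitrary weak-$*$ subsequential limit $\nu$ of the empirical measures on $[0,1)\times\mathcal S$ (which is automatically $F$-invariant with first marginal $\mu$, since $F$'s discontinuity set is countable and $x$ is CF-normal), form the output measure $\mathrm{Out}(\nu)$, and observe via the telescoping identity $T_*\mathrm{Out}(\nu)-\mathrm{Out}(\nu)=c_\nu^{-1}\bigl((\pi\circ F)_*\nu-\pi_*\nu\bigr)=0$ that it is $T$-invariant; absolute continuity plus the uniqueness of the Gauss measure among absolutely continuous $T$-invariant probabilities then finishes the job. This neatly sidesteps both the Pyatetski\u\i-Shapiro machinery and any need to identify or even uniquely determine the measure upstairs. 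What the paper's approach buys in exchange is the genuinely stronger and independently interesting equidistribution statement (Theorem \ref{thm:traversing}) --- so your remark that the decorated orbit ``need not'' equidistribute is actually contradicted by the paper once one restricts to a transitive component, though this has no bearing on the correctness of your argument, which never uses uniqueness of $\nu$.

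Two small gaps worth closing. First, you assert $c_\nu=\int\widetilde g\,d\nu\in(0,\infty)$ but only argue boundedness; positivity does need a word. It follows from $F$-invariance of $\nu$: if $\widetilde g=0$ $\nu$-a.e.\ then $\widetilde g\circ F^k=0$ $\nu$-a.e.\ for all $k$, so the algorithm would emit no digits along a $\nu$-generic orbit, contradicting that for every irrational input (and $\nu$'s first marginal $\mu$ charges only irrationals) the Gosper algorithm eventually emits infinitely many digits. Second, the telescoping identity implicitly requires $T\widetilde S_j(R,a)(Tz)=\widetilde S_{j+1}(R,a)(Tz)$ to hold pointwise for $\nu$-a.e.\ $(z,R)$ and each $j<g(R,a)$, which in turn requires $\widetilde S_j(R,a)(Tz)$ to lie in the correct rank-one cylinder $C_{[b]}$; this is precisely the defining property of the emission step, but since it is the entire content of the computation, it deserves to be stated as such rather than waved through via ``$T(B_by)=y$.'' Neither of these is a serious obstruction.
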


For a very specific class of matrices $M$, the above theorem is trivial. In particular, if $x=\langle 0;a_1,a_2,\dots\rangle $ is CF-normal, $c_0\in \mathbb{Z}$, and $c_1,c_2,\dots,c_k\in \mathbb{N}$, then
\[
\left( \begin{array}{cc} 1 & c_0 \\ 0 & 1 \end{array}\right) \left( \begin{array}{cc} 0 & 1 \\ 1 & c_1 \end{array}\right) \left( \begin{array}{cc} 0 & 1 \\ 1 & c_2 \end{array}\right)\dots  \left( \begin{array}{cc} 0 & 1 \\ 1 & c_k \end{array}\right)x = \langle c_0;c_1,c_2,\dots,c_k,a_1,a_2,\dots\rangle.
\]
It is easy to see that this preserves normality. However, all the matrices on the left here have determinant $\pm 1$, and thus so does their product. In fact, for any matrix with determinant $\pm 1$, the action of the matrix on an irrational number $x$ will alter the head of the expansion and leave the tail unchanged (see \cite[Theorem 2.37]{BPSZ}), thus preserving CF-normality. 

We emphasize that Theorem \ref{thm:main} works for any matrix that does not have determinant zero, not just those with determinant $\pm1$. The reason why we exclude matrices with determinant zero is that in such a case $Mx$ will always be the same \emph{rational} number, regardless of which $x$ is chosen.

Due to earlier work of Kraaikamp and Nakada \cite{KN} and the author \cite{VandeheyJointNormality}, we know that normality for regular continued fraction expansions---the kind we are studying in this paper---is equivalent to normality for nearest-integer continued fractions and continued fractions with odd partial quotients. Thus Theorem \ref{thm:main} holds for these expansions as well.

\subsection{The idea and outline of the proof}

Let us return to the question of normality to base $10$ and give a glimpse into why Wall's result is true.

Given a number $x$ that is base $10$ normal, how often do we expect to see the digit string $7$ appear in the base $10$ expansion of $2x$? We should see a $7$ appear in the $n$th position of $2x$ whenever we see one of the strings $35$, $36$, $37$, $38$, $39$, $85$, $86$, $87$, $88$, or $89$ appear starting in the $n$th position of $x$. We call these strings \emph{trigger strings} for the string $7$. But since $x$ is normal, each of these strings appears with limiting frequency $1/100$ and there are $10$ of them, so we expect to see $7$ appear with limiting frequency $1/10$. In this case, understanding how often trigger strings occur relies on knowing how the sequence $(10^n x)$ is distributed modulo $1$.

A slightly harder problem: if $x$ is base $10$ normal, then how often do  we expect to see $7$ in the base $10$ expansion of $x/3$? Here, to determine what the $n$th digit of $x/3$ is, we must not only know what the $n$th digit of $x$ is, we must know something about all the first $n-1$ digits. In particular, the $n$th digit of $x/3$ is $7$ if the $n$th digit of $x$ is $1$, $2$, or $3$, and the sum of the first $n-1$ digits of $x$ is $2$ modulo $3$. If one could show that each of these options appeared with limiting frequency $1/30$,\footnote{This is possible using the techniques of \cite{VandeheyJointNormality}.} that would give the desired limiting frequency for the string $7$ in the base $10$ expansion of $x/3$. 

This suggests that to show that division by $3$ preserves normality to base $10$, we want to understand how the pairs
\begin{equation}\label{eq:divide3}
\left( 10^n x \pmod{1},\quad \sum_{i=1}^{n-1} a_i(x) \pmod{3} \right)
\end{equation}
distribute in the set $[0,1) \times \{0,1,2\}$. (Here $a_i(x)$ is referring to the $i$th digit of the base $10$ expansion, not the continued fraction expansion, of $x$.)

We will run into a similar difficulty with continued fractions. What we will want is some way to examine the tail of the expansion of $Mx$, and to do this, we want to be able to push the matrix $M$ through the first $n$ digits of the continued fraction expansion for $x$ like so:
\begin{equation}\label{eq:transitionstates}
M\left( \cfrac{1}{a_1+\cfrac{1}{a_2+\dots+\cfrac{1}{a_n+T^n x }}}\right) = b_0+\cfrac{1}{b_1+\cfrac{1}{b_2+\dots+\cfrac{1}{b_m+M_n\left( T^n x\right) }}} .
\end{equation}
with $M_n$ belonging to some set $\mathcal{M}$. 
Our analogy to \eqref{eq:divide3} will be the sequence $(T^n x, M_n)$ and we want to show this  distributes nicely in the space $[0,1)\times \mathcal{M}$.

As an explicit example, suppose $M= \left( \begin{array}{cc} 1 & 0 \\ 0 & 2 \end{array}\right)$. Then
\[
Mx = \frac{1}{2} x = \frac{1}{2} \frac{1}{a_1+Tx} = \frac{1}{2a_1+ 2(Tx) }.
\]
By reinterpreting this in terms of matrices, we get that
\[
\left( \begin{array}{cc} 1 & 0 \\ 0 & 2 \end{array}\right)\left( \begin{array}{cc} 0 & 1 \\ 1 & a_1 \end{array}\right)  = \left( \begin{array}{cc} 0 & 1 \\ 1 & 2a_1 \end{array}\right) \left( \begin{array}{cc} 2 & 0 \\ 0 & 1 \end{array}\right).
\]
If instead, $M=\left(\begin{array}{cc} 2 & 0 \\ 0 & 1\end{array}\right)$, then there are more possibilities, depending on the value of $a_1(x)$:
\begin{align*}
\left( \begin{array}{cc} 2 & 0 \\ 0 & 1 \end{array}\right)\left( \begin{array}{cc} 0 & 1 \\ 1 & 2n \end{array}\right) &= \left( \begin{array}{cc} 0 & 1 \\ 1 & n\end{array}\right) \left( \begin{array}{cc} 1 & 0 \\ 0 & 2 \end{array}\right),\\
\left( \begin{array}{cc} 2 & 0 \\ 0 & 1 \end{array}\right)\left( \begin{array}{cc} 0 & 1 \\ 1 & 1 \end{array}\right) &= \left( \begin{array}{cc} 0 & 2 \\ 1 & 1 \end{array}\right), \text{ and }\\
\left( \begin{array}{cc} 2 & 0 \\ 0 & 1 \end{array}\right)\left( \begin{array}{cc} 0 & 1 \\ 1 & 2n+1 \end{array}\right) &= \left( \begin{array}{cc} 0 & 1 \\ 1 & n \end{array}\right) \left( \begin{array}{cc} 1 & 1 \\ 0 & 2 \end{array}\right).
\end{align*}

The statement of Lemma \ref{lem:transition} will essentially say that if we started with a ``nice" matrix $M$, then we can in fact choose the matrices $M_n$ in \eqref{eq:transitionstates} to always belong to a particular finite set $\mathcal{M}$; and in Remark \ref{rem:firstsimplification}, we show that it suffices to prove Theorem \ref{thm:main} when $M\in \mathcal{M}$. Theorem \ref{thm:traversing} then says that the sequence $(T^n x, M_n)$ is well-distributed with respect to some measure, provided $M$ starts in a subset of $\mathcal{M}$ with good properties. In Section \ref{sec:buildingdynam}, we again show that it suffices to prove Theorem \ref{thm:main} when $M$ is in such a subset. Finally, in Section \ref{sec:triggerstring}, we will define exactly what it means to be a trigger string, and from here complete the proof of Theorem \ref{thm:main} in Section \ref{sec:proof}.

\subsection{Notations and definitions}
We will denote the continued fraction expansion of $x$ by $\langle a_0;a_1,a_2,\dots\rangle $ and the continued fraction expansion of $Mx$ by $\langle b_0;b_1,b_2,\dots\rangle $. We will commonly use $n$ denote an index for $x$ (so we might talk about the $n$th digit of $x$) while we will use $m$ to denote an index for $Mx$. 

We will often refer to an arbitrary string as $s$ and its digits as $[c_0;c_1,c_2,\dots,c_n]$ or (if $c_0=0$) as $[c_1,c_2,\dots,c_n]$ (in this latter case, we say that $s$ is \emph{proper}). Throughout the paper, we will always consider $c_0\ge -1$ and $c_i\in \mathbb{N}$ for $i\in \mathbb{N}$. The length of a string $s=[c_0;c_1,\dots,c_n]$, denoted by $|s|$, is $n$, regardless of the value of $c_0$. The string $[c]$ with no semi-colon will always denote the string $[0;c]$. When we wish to distinguish between strings of digits in $x$ and strings of digits in $Mx$, we will use $s=[c_0;c_1,c_2,\dots,c_n]$ for strings in $x$ and $t=[d_0;d_1,d_2,\dots,d_m]$ for strings in $Mx$. We will also make reference to a string $r$ that is a substring in the expansion of $Mx$.

We will use $M$ to denote an arbitrary matrix and $\mathcal{M}$ to denote a collection of matrices. We will let 
 \[
A_i = \left( \begin{array}{cc} 1 & i \\ 0 & 1 \end{array} \right) \text{ and } J  = \left( \begin{array}{cc} 0 & 1 \\ 1 & 0 \end{array} \right).\]
 Typically the matrices $A_i$ are denoted by $T_i$ or $T^i$, but we wish to preserve the letter $T$ for the Gauss map. 

To any string $s=[c_0;c_1,c_2,\dots,c_n]$ there is a canonical identification with a matrix
\[
M_s = A_{c_0}JA_{c_1}JA_{c_2}\dots JA_{c_n}= \left( \begin{array}{cc} P & P' \\ Q & Q' \end{array}\right), 
\]
where $P/Q =\langle c_0; c_1,\dots,c_{n-1}\rangle$ and $P'/Q' = \langle c_0;c_1,c_2,\dots,c_n\rangle$, both in lowest terms with positive denominator \cite[Lemma 2.8]{BPSZ}. 
With this in mind, we may define the concatenation of strings $s$ and $s'=[c_0';c_1',\dots,c'_{n'}]$, denoted by $ss'$ or $s.s'$, as the string associated to the matrix 
\[
M_s M_{s'}= A_{c_0}JA_{c_1}JA_{c_2}\dots JA_{c_n}A_{c'_0}JA_{c'_1}JA_{c'_2}\dots JA_{c'_{n'}}
\]
reduced according to the following rules: if $c_n+c'_0>0$ then $A_{c_n}A_{c'_0}=A_{c_n+c'_0}$ and otherwise if $c_n+c'_0=0$ then $A_{c_{n-1}}JA_{c_n}A_{c'_0}JA_{c'_1} = A_{c_{n-1}+c'_1}$. We note that the concatenation $[-1;1].[-1;1]$ equals the empty string.

Similarly we may also define the action of a string on a point by
\begin{equation}\label{eq:appends}
s.x = M_s x = \langle c_0;c_1,c_2,\dots,c_n+x\rangle.
\end{equation}
If $x\in [0,1)$ then this amounts to appending the string $s$ to the start of the continued fraction expansion of $x$. It is clear by \eqref{eq:appends} that if $x\in[0,1)$ is CF-normal then $s.x$ is CF-normal for any string $s$.

We shall make use of standard asymptotic notation as well. We will say that $f(n)=O(g(n))$ if there exists a constant $C$ (called an implicit constant) such that $|f(n)|\le C \cdot g(n)$. We will say that $f(n)\asymp g(n)$ (with implicit constant $C$) if $f(n)=O(g(n))$ and $g(n)=O(f(n))$ (both with implicit constant $C$). If we have two $k\times k$ matrices $K_1, K_2$, then we say that $K_1 \asymp K_2$ (with implicit constant $C$) if $(K_1)_{i,j}\asymp (K_2)_{i,j}$ for $1\le i,j\le k$ (uniformly with implicit constant $C$). We will say $f(n)=o(g(n))$ if $f(n)/g(n)\to 0$ as $n \to \infty$. If a variable appears in a subscript of a big-Oh or little-oh, this denotes that the implicit constant or rate of decay is dependent on this variable.

We will say a vector or matrix is non-negative (or positive) if all its coordinates are non-negative (or positive). We will call a vector a probability vector if it is non-negative and the sum of its coordinates is $1$.

\section{Matrices and resultant strings}\label{sec:matrix}

Given $D\in \mathbb{N}$, let $\mathcal{M}_D$ denote the set of $2\times 2$ matrices $\left( \begin{array}{cc} \alpha & \beta \\ \gamma & \delta\end{array}\right)$ that have determinant $\pm D$ and satisfy one of the following conditions:
\begin{itemize}
\item $\gamma=0$, $\beta \ge 0$, $\alpha,\delta>0$, and $\beta<\delta$. (Type I)
\item $\delta=0$, $\alpha\ge 0$, $\beta,\gamma>0$, and $\alpha<\gamma$. (Type II)
\item $\alpha=0$, $\delta\ge 0$, $\beta, \gamma>0$ and $\delta<\beta$. (Type III)
\item $\beta=0$, $\gamma\ge 0$, $\alpha, \delta>0$, and $\gamma<\alpha$. (Type IV)
\item $\alpha<0$, $\beta,\gamma,\delta>0$, and $|\alpha|<\gamma$. (Type V)
\item $\beta<0$, $\alpha,\gamma,\delta>0$, and $|\beta|<\delta$. (Type VI)
\end{itemize}
It is easy to see from the above conditions paired with determinant requirement that $|\alpha|,|\beta|,|\gamma|,|\delta|\le|D|$, and thus $\mathcal{M}_D$ is a finite set.

\begin{lem}\label{lem:transition}
Given any $M\in \mathcal{M}_D$ and $j\in \mathbb{N}$, there exists a matrix $M'\in \mathcal{M}_D$, and integer $m\ge 0$ and integers $d_0\ge -1$ and $d_1,d_2,\dots,d_m\in \mathbb{N}$ such that
\begin{equation}\label{eq:matrixtransfer}
MJ A_j = A_{d_0}JA_{d_1} JA_{d_2} \dots JA_{d_m} M'.
\end{equation}
Moreover, if $d_0=-1$ then $m\ge 1$. 
\end{lem}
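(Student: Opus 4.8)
The plan is to obtain \eqref{eq:matrixtransfer} by running the left continued fraction algorithm on the single fixed matrix $N:=MJA_j$: peel off $A_{d_0}$, then $JA_{d_1}$, then $JA_{d_2}$, and so on, stopping the instant the residual matrix lands in $\mathcal{M}_D$. Since $JA_j=\left(\begin{smallmatrix}0&1\\1&j\end{smallmatrix}\right)$, writing $M=\left(\begin{smallmatrix}\alpha&\beta\\\gamma&\delta\end{smallmatrix}\right)$ gives $N=\left(\begin{smallmatrix}\beta&\alpha+j\beta\\\delta&\gamma+j\delta\end{smallmatrix}\right)$, again of determinant $\pm D$, and---crucially---regardless of the type of $M$ one checks that $Ny>-1$ for all $y\in[0,1)$, because $JA_j$ carries $[0,1)$ into $(0,1]$ and every matrix in $\mathcal{M}_D$ sends $(0,1]$ into $(-1,\infty]$ (this reduces to $(\alpha+\gamma)y+(\beta+\delta)>0$, which the type conditions guarantee). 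Equation \eqref{eq:matrixtransfer} is a pure algebraic identity once the $d_i$ and $M'$ have been named, so all the content is in showing that $M'$ can be taken in $\mathcal{M}_D$ while the $d_i$ meet the stated sign requirements.

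The engine is one reduction step. If $N\notin\mathcal{M}_D$, put $d_0:=\lfloor\,\inf_{y\in[0,1)}Ny\,\rfloor$ (so $d_0\ge-1$ by the previous remark) and $N_1:=A_{-d_0}N$; thereafter, while $N_i\notin\mathcal{M}_D$, put $d_i:=\lfloor 1/(N_i\cdot 0)\rfloor$ and $N_{i+1}:=A_{-d_i}JN_i$. Concretely the passage $N_i\mapsto N_{i+1}$ swaps the two rows of $N_i$ and subtracts $d_i$ times one from the other, that is, it is one subtractive Euclidean step on the rows. Halt at the first $N_k\in\mathcal{M}_D$ and set $M':=N_k$, $m:=k-1$. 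Termination is immediate: $|\det N_i|=D$ throughout, while each step with a positive quotient strictly decreases the integer $\max\{|a|+|b|,\,|c|+|d|\}$ of the residual $\left(\begin{smallmatrix}a&b\\c&d\end{smallmatrix}\right)$; so after boundedly many steps the entries are pinned down---indeed bounded by $D$, as is automatic inside $\mathcal{M}_D$---and the algorithm can only have halted at a reduced matrix, which is precisely what it means to lie in $\mathcal{M}_D$.

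Two bookkeeping points remain, both of which I would fold into the verification: that every $d_i$ with $i\ge1$ is a positive integer (this is the statement that, after the first swap, the quotient peeled off at each stage is a genuine continued fraction digit, which one reads directly off the shape of $N_i$), and that the output never has $d_0=-1$ together with $m=0$. For the last point, when $d_0=-1$ is forced---which, one checks, pins $M$ down to Type V or VI with $j$ small relative to $D$---the residual $A_1N$ is not yet reduced, so at least one further digit gets peeled and $m\ge1$.

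The step I expect to be the real obstacle is exactly the claim that the reduction always \emph{halts inside} $\mathcal{M}_D$ rather than passing through it, together with the accompanying sign checks on the $d_i$. This is a finite but delicate case analysis, organized by the type of $M$ and, within each type, by the residue of $j$ (equivalently, of an appropriate entry of $N$) modulo the relevant pivot---exactly the phenomenon illustrated by the worked example $M=\left(\begin{smallmatrix}2&0\\0&1\end{smallmatrix}\right)$ in the introduction, where $j=1$, $j$ even, and $j$ odd land in Types III, I, and I respectively. Types V and VI (a genuine negative entry, and the distinguished role of small $j$) and the boundary subcases of Types I--IV (an entry equal to $0$) carry most of the work; the underlying termination, and the principle that ``reduced'' for this algorithm means precisely one of the six sign patterns defining $\mathcal{M}_D$, are classical facts about the subtractive Euclidean algorithm on a $2\times2$ integer matrix of bounded determinant.
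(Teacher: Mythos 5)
Your approach is the same as the paper's: write down $N=MJA_j$, peel off $A_{d_0}$ to make the residual non-negative, then iterate a left Euclidean (continued-fraction) step $N_i\mapsto A_{d_i}^{-1}JN_i$ until the residual lands in $\mathcal{M}_D$, at which point \eqref{eq:matrixtransfer} is an identity by construction. The observation that $\inf_{y\in[0,1)}Ny>-1$ (so that $d_0\ge -1$) is the same fact the paper extracts entry-by-entry, and your monovariant is essentially the paper's ``no coordinate grows and one strictly shrinks.'' So there is no structural divergence between the two arguments.

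However, there is a genuine gap, and you flag it yourself: the ``finite but delicate case analysis, organized by the type of $M$'' that you defer is not a routine verification --- it is the entire content of the paper's proof. In the paper, the choice of $d_0$ is handled in four cases according to the type of $M$ (Types I/IV, II/III, V, VI), and it is precisely the Type VI case (where both $\alpha_{-1}<0$ and $\beta_{-1}\le 0$ can occur) that forces $d_0=-1$ and then yields the final clause ``$d_0=-1\Rightarrow m\ge 1$,'' because the resulting $M_0$ is deliberately \emph{not} in $\mathcal{M}_D$. Your claim that the reduction ``halts inside $\mathcal{M}_D$'' also requires checking that the first time a negative entry appears, the residual is automatically of Type V or VI; this is true, but only because at each stage the paper maintains the invariant that at least one of $|\alpha_i|<\gamma_i$ or $|\beta_i|<\delta_i$ holds and at most one of $\alpha_i,\beta_i$ is non-positive, and establishing that invariant \emph{is} the case analysis. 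Stating that it is a finite check does not discharge it.

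A secondary concern: your iteration rule $d_i:=\lfloor 1/(N_i\cdot 0)\rfloor=\lfloor\delta_i/\beta_i\rfloor$ looks only at the second column of $N_i$, whereas the paper's rule takes the minimum of the corresponding floors from \emph{both} columns. Using one column alone can overshoot: $d_i$ chosen from the $(\beta_i,\delta_i)$ column may exceed $\lfloor\gamma_i/\alpha_i\rfloor$ by more than one, producing a residual whose top-left entry is so negative that it fails the $|\alpha|<\gamma$ bound required by Type V, so the residual escapes $\mathcal{M}_D$ and the halting condition is never met. Ruling this out is exactly the kind of sign/size check you have deferred; to make the argument go through as the paper does you would need to track both columns simultaneously (i.e., take the minimum) and prove the resulting invariant by cases. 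Until that is done the proof is a plan rather than a proof.
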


This lemma is similar in statement and purpose to Lemma 6 from Liardet and Stambul \cite{LS}. 

We note that we could have that $m=0$ and $d_0=0$ so that $MJA_j = M'$.

\begin{proof}
We let $M=\left( \begin{array}{cc} \alpha & \beta \\ \gamma & \delta\end{array}\right)$. Then
\[
MJA_j = \left( \begin{array}{cc} \beta & \alpha+\beta j \\ \delta & \gamma+ \delta j \end{array}\right).
\]
We call this new matrix $M_{-1}$ and denote its coefficients by $\alpha_{-1}, \beta_{-1}, \gamma_{-1}, \delta_{-1}$ as appropriate. First, we claim that we can choose $d_0\ge -1$ and create a new matrix 
\[
M_0 = A_{d_0}^{-1}M_{-1}= \left( \begin{array}{cc} \alpha_{-1}- d_0 \gamma_{-1} & \beta_{-1}- d_0 \delta_{-1}\\ \gamma_{-1} & \delta_{-1} \end{array}\right),
\]
with coordinates given by $\alpha_0, \beta_0,\gamma_0,\delta_0$, such that either $M_0\in \mathcal{M}_D$ (in which case the proof is complete with $M_0=M'$) or at least one of the inequalities $|\alpha_0|<\gamma_0$ or $|\beta_0|<\delta_0$ holds and at most one of $\alpha_0$ or $\beta_0$ is non-positive. (Since the determinant is non-zero, we cannot have both $\alpha_0$ and $\beta_0$ be $0$.) We proceed by cases.

\emph{Case 1: $M$ is a Type I or Type IV matrix.}  In this case, all coefficients of $M_{-1}$ are positive, except perhaps for $\alpha_{-1}$ which could equal $0$ if we had that $\beta=0$. Regardless, we may choose 
\begin{equation}\label{eq:d0def}
d_0 = \min \left\{ \left\lfloor \frac{\alpha_{-1}}{\gamma_{-1}}\right\rfloor,  \left\lfloor \frac{\beta_{-1}}{\delta_{-1}}\right\rfloor\right\},
\end{equation}
which will guarantee that either $0 \le \alpha_{-1} - d_0 \gamma_{-1} < \gamma_{-1}$ and $0 \le \beta_{-1} - d_0 \delta_{-1}$ (if the first term is minimal), or else $0 \le \beta_{-1} - d_0 \delta_{-1} < \delta_{-1}$ and $0 \le \alpha_{-1} - d_0 \gamma_{-1}$ (if the second term is minimal).

\emph{Case 2: $M$ is a Type II or Type III matrix.} In this case, all coefficients of $M_{-1}$ are positive, except perhaps $\gamma_{-1}$, which could equal $0$ if we had that $\delta=0$. If $\gamma_{-1}=0$, then we choose $d_0=\lfloor \beta_{-1}/\delta_{-1}\rfloor$, so that $0\le \beta_{-1}-d_0 \delta_{-1} < \delta_{-1}$. (This will be a Type I Matrix, finishing the proof in this case.) Otherwise, if $\gamma_{-1}\neq 0$, then we choose $d_0$ as in \eqref{eq:d0def} to the same results.

\emph{Case 3: $M$ is a Type V matrix.} In this case, all coefficients of $M_{-1}$ are positive, except for $\beta_{-1}$ which could be $0$ or even negative. If $\beta_{-1}>0$, then all coefficients are positive, and we again choose $d_0$ as in \eqref{eq:d0def} to the same results. If $\beta_{-1}=0$, then we can let $d_0=0$ as $|\beta_{-1}-d_0\delta_{-1}|< \delta_{-1}$ is trivially true in this case and $\alpha_{-1}-d_0 \gamma_{-1}$ will be positive. If $\beta_{-1}<0$, then $|\beta_{-1}| < |\alpha|<\gamma < \delta_{-1}$, so $M_{-1}$ is already a Type VI matrix and we are done in this case.

\emph{Case 4: $M$ is a Type VI matrix.} In this case, $\gamma_{-1},\delta_{-1}>0$, $\alpha_{-1}<0$ and $\beta_{-1}$ could be zero, positive, or negative. If $\beta_{-1}$ is positive, then since $|\beta|<\delta$, we have that $|\alpha_{-1}|<\gamma_{-1}$, we have that $M_{-1}$ is a Type V matrix and we are done in this case. So suppose $\beta_{-1}\le 0$. Then since $|\beta|<\delta$ by assumption, we have that $|\beta_{-1}| = |\alpha+\beta j| < |\beta j| < |\delta j | < \delta_{-1}$. Thus we have $\alpha_{-1}<0$, $\beta_{-1}\le 0$ and $|\alpha_{-1}| < \gamma_{-1}$ and $|\beta_{-1}| <\delta_{-1}$. So if we choose $d_0=-1$, we have
\[
M_0 = \left( \begin{array}{cc} \alpha_{-1}+\gamma_{-1} & \beta_{-1}+ \delta_{-1}\\ \gamma_{-1} & \delta_{-1} \end{array} \right).
\]
with all the coefficients positive and $0<\alpha_0 < \gamma_0$, satisfying the desired conditions. (Note that in this case, we do not have that $M_0 \in \mathcal{M}_D$, which, with the work below, will guarantee the final line of the statement of the lemma.)

Now we may assume that at least one of the inequalities $|\alpha_0|<\gamma_0$ or $|\beta_0|<\delta_0$ holds and at most one of $\alpha_0$ or $\beta_0$ is non-positive. 

If $M_0\in \mathcal{M}_D$, then we are done. Note that if $M_0$ has either $\alpha_0$ or $\beta_0$ be negative, then it automatically must be a Type V or Type VI matrix. Therefore, let us suppose that $M_0$ is not in $\mathcal{M}_D$, so that all coordinates will be non-negative, with at most one $0$ at $\alpha_0$ or $\beta_0$. Then we choose 
\[
d_1 = \min \left\{ \left\lfloor \frac{\alpha_{0}}{\gamma_{0}}\right\rfloor,  \left\lfloor \frac{\beta_{0}}{\delta_{0}}\right\rfloor\right\},
\]
so that the matrix $M_1 = A_{d_1}^{-1} J M_{0}$ with coordinates $\alpha_1,\beta_1,\gamma_1,\delta_1$ satisfies $|\alpha_1|<\gamma_1$ or $|\beta_1|<\delta_1$  with at most one of $\alpha_1$ or $\beta_1$ is non-positive. If $M_1$ is not in $\mathcal{M}_D$, then we may choose $d_2\ge 1$ in the same way that we chose $d_1$ and construct the matrix $M_2= A_{d_2}^{-1} J M_1$ satisfying an analogous property, and so on. 



At some point as we iterate this procedure we must reach a matrix $M_m\in \mathcal{M}_D$: at every stage, as we go from the matrix $M_i$ to the matrix $M_{i+1}$, we rearrange the coordinates and subtract at least $1$ from one of them (no coefficients grow in size), and we know that the moment we hit a negative coefficient we will have a Type V or Type VI matrix, which is in $\mathcal{M}_D$. Thus after some finite number of steps we must arrive at a matrix $M_m\in \mathcal{M}_D$.

This completes the proof.\end{proof}

\begin{lem}\label{lem:transitionlength}
In Lemma \ref{lem:transition}, the integer $m$ is uniformly bounded by a constant dependent on $D$.
\end{lem}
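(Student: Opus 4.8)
The plan is to track how the total size of the matrix coefficients decreases as we run the iterative procedure from the proof of Lemma \ref{lem:transition}. Recall that the procedure passes from $M_{-1} = MJA_j$ through matrices $M_0, M_1, M_2, \dots$ until reaching some $M_m \in \mathcal{M}_D$, where $M_{i+1} = A_{d_i}^{-1} J M_i$ for $i \ge 1$ (and $M_0 = A_{d_0}^{-1} M_{-1}$), and $d_i = \min\{\lfloor \alpha_i/\gamma_i \rfloor, \lfloor \beta_i/\delta_i \rfloor\}$. The key structural fact established in that proof is that once all coordinates of some $M_i$ are non-negative and we are not yet in $\mathcal{M}_D$, each step rearranges the coordinates (via the $J$) and subtracts $d_i \gamma_i$ from $\alpha_i$ and $d_i \delta_i$ from $\beta_i$, with $d_i \ge 1$; no coefficient ever grows in absolute value; and the first time a negative coefficient appears we are immediately in a Type V or Type VI matrix, hence in $\mathcal{M}_D$ and the procedure halts.

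First I would reduce to the non-negative regime: since the initial reduction to $M_0$ takes only one step, and the appearance of any negative coordinate terminates the procedure at the very next matrix, it suffices to bound the number of steps during which every $M_i$ has all coordinates non-negative. Then I would use the quantity $\Sigma_i := \alpha_i + \beta_i + \gamma_i + \delta_i$ (the sum of the coordinates, all non-negative in this regime) as a monovariant. At each such step, the $J$ and the rearrangement preserve $\gamma_i, \delta_i$ as a set but feed them into the new ``top row,'' while the new top row entries $\alpha_{i+1}, \beta_{i+1}$ are $\gamma_i - d_{i+1}(\text{something})$ type expressions that are strictly smaller than the old $\gamma_i, \delta_i$; concretely one checks that $\Sigma_{i+1} \le \Sigma_i - \max(\gamma_i, \delta_i) \le \Sigma_i - \Sigma_i/2$, so $\Sigma$ at least halves at each step. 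Since $\Sigma_{-1} = O(D)$ by the coefficient bound $|\alpha|,|\beta|,|\gamma|,|\delta| \le |D|$ noted before Lemma \ref{lem:transition} (applied to $M \in \mathcal{M}_D$, so $|\alpha_{-1}|,\dots \le |D|(1+j)$, but in fact after the first step $\Sigma_0 = O(D)$ since $M_0$ already has its bottom row equal to $(\gamma, \delta)$ with entries $\le |D|$ and its reduced top row bounded by $\gamma_0 + \delta_0 \le 2|D|$ up to the one possible negative entry), the number of halvings before $\Sigma$ drops below, say, $2|D|$ — at which point the matrix is forced into $\mathcal{M}_D$ — is $O(\log D)$.

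The main obstacle, and the place I would be most careful, is verifying that $\Sigma$ genuinely strictly decreases by a definite multiplicative factor at \emph{every} step of the non-negative regime, including edge cases where one of $d_i$ equals zero or where one coordinate is zero. If $d_{i+1} = 0$ it must be because, say, $\alpha_i < \gamma_i$ already; but then after applying $J$ the roles swap and one must check the procedure does not stall — here I would lean on the fact, implicit in the proof of Lemma \ref{lem:transition}, that a matrix with all coordinates non-negative, with $|\alpha_i| < \gamma_i$ or $|\beta_i| < \delta_i$, and not in $\mathcal{M}_D$, must fail the $\mathcal{M}_D$ conditions in a way that forces a genuine reduction at the next step. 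An alternative, cleaner route that avoids chasing these cases is to argue by contradiction from finiteness: $\mathcal{M}_D$ is a fixed finite set, the number of strings $[d_0; d_1, \dots, d_m]$ with $m \le N$ that can arise is finite, and the map $(M, j) \mapsto m$ is determined by $M \in \mathcal{M}_D$ and by $j$; one then shows $m$ depends only on $M$ and the residue/size class of $j$ modulo a bounded quantity, of which there are finitely many, giving a uniform bound by pure counting. I would likely present the monovariant argument as the main proof and mention the counting argument as a remark, since the monovariant also gives the explicit $O(\log D)$ rate, which is reused implicitly in later estimates.
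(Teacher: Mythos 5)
Your monovariant argument contains a fatal arithmetic error that the whole proof hinges on. You claim $\Sigma_0 = O(D)$ because ``$M_0$ already has its bottom row equal to $(\gamma,\delta)$''; but this is not the bottom row of $M_0$. We have $M_{-1} = MJA_j = \left(\begin{smallmatrix}\beta & \alpha+\beta j \\ \delta & \gamma+\delta j\end{smallmatrix}\right)$, and $M_0 = A_{d_0}^{-1}M_{-1}$ shares the \emph{same} bottom row $(\gamma_{-1},\delta_{-1}) = (\delta,\ \gamma+\delta j)$. The entry $\gamma+\delta j$ grows linearly in $j$, so $\Sigma_0 = \Theta(Dj)$, not $O(D)$. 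Your halving argument then gives $m = O(\log(Dj))$ — a bound that depends on $j$, which is precisely what the lemma must avoid. This is not a patchable detail: a coefficient-size monovariant cannot possibly work here, because the coefficients of $M_{-1}$ genuinely are unbounded in $j$, yet $m$ is not. The lemma is asserting that the algorithm's running time is controlled by something much smaller than the coefficient sizes.

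The paper's proof identifies what that smaller quantity is. From the identity $MJA_j = A_{d_0}JA_{d_1}\cdots JA_{d_m}M'$, apply both sides to $x\to\infty$ to get $\beta/\delta = \langle d_0;d_1,\dots,d_m\rangle . (\alpha'/\gamma')$. The rationals $\beta/\delta$ (from $M$) and $\alpha'/\gamma'$ (from $M'$) each come from a matrix in the finite set $\mathcal{M}_D$; in particular both have numerator and denominator bounded by $D$, hence continued fraction expansions of length $O_D(1)$. Since the string $[d_0;d_1,\dots,d_m]$ concatenated with the expansion of $\alpha'/\gamma'$ must reproduce the expansion of $\beta/\delta$ (with the usual care about the boundary digit and the degenerate cases $\delta=0$ or $\gamma'=0$), $m$ is forced to be $O_D(1)$. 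The bound comes from the boundary data $\beta/\delta$ and $\alpha'/\gamma'$, not from the intermediate coefficients. Your sketched ``counting'' alternative gestures in this direction when you say $m$ should depend only on $M$ and a bounded amount of information about $j$, but as written it is circular — establishing that independence of $j$ is exactly the content of the lemma, and ``pure counting'' over $\mathcal{M}_D$ alone does not supply it without the limit-at-infinity identification above.
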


\begin{proof}Let \eqref{eq:matrixtransfer} holds for some matrices $M,M'$ and integers $j,d_0,d_1,\dots,d_m$.
Let the coordinates of $M$ and $M'$ be $\alpha,\beta,\gamma,\delta$ and $\alpha',\beta',\gamma',\delta'$ respectively.

 Let $P/Q = \langle d_0;d_1,d_2,\dots,d_{m-1}\rangle$ and $P'/Q' = \langle d_0;d_1,d_2,\dots,d_m\rangle$. We may assume, without loss of generality, that $m\ge 2$, so that both $Q$ and $Q'$ are positive. 

By \eqref{eq:matrixtransfer}, we have for any $x\in \mathbb{R}$ with $x\neq- (\gamma+j\delta)/\delta$ that
\[
MJA_j x = A_{d_0}JA_{d_1}\dots JA_{d_m} M' x = \left(\begin{array}{cc} P & P' \\  Q & Q' \end{array}\right) M' x.
\]
Allowing $x$ to tend to infinity implies that 
\[
\frac{\beta}{\delta} = \left(\begin{array}{cc} P & P' \\  Q & Q' \end{array}\right)  \frac{\alpha'}{\gamma'} = \frac{P\alpha'+P' \gamma'}{Q\alpha' + Q' \gamma'},
\]
where if any denominator equals $0$, we take the corresponding fraction to be equal to infinity. We prove the lemma by cases.

\emph{Case 1: $\delta=0$, $\alpha'=0$}. In this case, we cannot also have that $\gamma'_0 =0$ as $M'$ has a non-zero determinant, so we must have that $Q'=0$. However, we already know that $Q'$ is positive, so this case does not occur.

\emph{Case 2: $\delta=0$, $\alpha'\neq 0$}. In this case, we must have that $Q/Q' =-\gamma'/\alpha'$. But it is well-known that $Q/Q' = \langle 0;d_m,d_{m-1},d_{m-2},\dots,d_1\rangle$ (see \cite[pp. 32]{BPSZ}). There are only finitely many choices for $-\gamma'/\alpha'$, since $\mathcal{M}_D$ is a finite set, so  $m$ must be uniformly bounded in this case. 

\emph{Case 3: $\delta \neq 0$, $\gamma'=0$.} In this case, since $M'$ has a non-zero determinant, we have that $\alpha'$ cannot be $0$. Thus, $\beta/\delta = P/Q = \langle d_0;d_1,d_2,\dots,d_{m-1}\rangle$. Again, there are only finitely many possible choices for the rational number $\beta/\delta$, so $m$ must be uniformly bounded in this case.

\emph{Case 4: $\delta \neq 0$, $\gamma'\neq 0$} Let $\alpha'/\gamma' = \langle d_0'; d_1', \dots, d'_{m'} \rangle$. By the definition of $\mathcal{M}_D$, we must have $d_0' \ge -1$. If $d_m + d_0' \ge 1$, then we have
\[
\frac{\beta}{\delta}=\frac{P\alpha'+P' \gamma'}{Q\alpha' + Q' \gamma'} = \langle d_0;d_1,\dots,d_m+d_0',d_1',d_2',\dots,d'_{m'}\rangle.
\]
There are again only finitely many possible choices for $\beta/\delta$ and for each choice, the continued fraction expansion is completely fixed (up to the last digit). Likewise with $\alpha'/\gamma'$. Thus, for each of these finitely many choices, we must have that the sequence $\{d_0,d_1,\dots,d_m\}$ is fixed, and hence $m$ must be uniformly bounded.

If $d_m+d_0' = 0$, then 
\[
\frac{\beta}{\delta}=\frac{P\alpha'+P' \gamma'}{Q\alpha' + Q' \gamma'} = \langle d_0;d_1,\dots,d_{m-1}+d_1',d_2',\dots,d'_{m'}\rangle.
\]
The same argument applies here.
\end{proof}

By Lemma \ref{lem:transition}, we can construct functions $R([j],M)$, $U([j],M)$ that act on length-one strings of positive digits and matrices $M\in \mathcal{M}_D$ such that $R([j],M)=[d_0;d_1,d_2,\dots,d_m]$ and $U([j],M)=M'$ for a sequence $d_i$, $0\le i \le m$, and $M'$ satisfying \eqref{eq:matrixtransfer}. These functions need not be unique. We can then extend these functions to act on arbitrary proper strings $s$ in their first coordinate. We let 
\begin{equation}\label{eq:Udef}
U([c_1,c_2,\dots,c_n],M) = U\Big([c_2,\dots,c_m],U([c_1],M)\Big)
\end{equation}
and
\begin{equation}\label{eq:Rdef}
R([c_1,c_2,\dots,c_n],M) = R([c_1],M) . R\Big([c_2,c_3,\dots,c_n],U([c_1],M)\Big).
\end{equation}
The string $R(s,M)$ is called the resultant string of $s$ and $M$. With these definitions,  if $x=\langle a_1,a_2,a_3,\dots\rangle $, $M\in \mathcal{M}_D$, then  by applying  \eqref{eq:matrixtransfer} twice, we have that
\begin{align*}
Mx &= M\langle a_1,a_2,\dots,a_n+T^n x\rangle\\
&= M A_{a_1} A_{a_2} \dots A_{a_n} (T^n x)\\
&= R([a_1],M).\Big( U([a_1],M)  A_{a_2} \dots A_{a_n} (T^n x) \Big) \\
&= R([a_1],M).\Big( R\big([a_2],U([a_1],M)\big). \big( U([a_2],U([a_1],M))  A_{a_3} \dots A_{a_n} (T^n x)\big) \Big) \\
&= R([a_1,a_2],M). \Big( U([a_1,a_2],M)  A_{a_3} \dots A_{a_n} (T^n x)\Big) 
\end{align*}
where the last equality follows by  \eqref{eq:Udef} and \eqref{eq:Rdef}. By applying  \eqref{eq:matrixtransfer} another $n-2$ more times, we see that
\begin{equation}\label{eq:criticalrelation}
Mx =R([a_1,a_2,\dots,a_n],M).\Big( U([a_1,a_2,\dots,a_n],M)\left( T^nx\right) \Big).
\end{equation}

As noted earlier, the functions $R$ and $U$ need not be unique, but we will assume throughout the rest of the paper that we are referring to the same function $R$ and $U$ every time.

\begin{lem}\label{lem:contractingterms}
There are only finitely many pairs $([j],M)$, $j\in \mathbb{N}$, $M\in \mathcal{M}_D$, whose resultant string is $[-1;1]$. 

Moreover, suppose we have a pair $([c_1,c_2,\dots,c_k],M)$, $c_i\in \mathbb{N}$, $M\in \mathcal{M}_D$, such that for all $i$ in the interval $1\le i \le k$, we have $R([c_i],U([c_1,c_2,\dots,c_{i-1},M))=[-1;1]$. Then $k$ is uniformly bounded by a constant dependent on $D$.
\end{lem}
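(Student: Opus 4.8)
The plan is to translate the hypothesis ``the resultant string equals $[-1;1]$'' into a matrix identity and then exploit the fact that the matrix attached to the string $[-1;1]$ is an involution. Write $B:=M_{[-1;1]}=A_{-1}JA_1=\left(\begin{smallmatrix}-1&0\\1&1\end{smallmatrix}\right)$; a one-line computation gives $B^2=I$ (forced anyway by the earlier remark that the concatenation $[-1;1].[-1;1]$ is the empty string). For the first assertion, suppose $R([j],M)=[-1;1]$ with $j\in\mathbb{N}$, $M\in\mathcal{M}_D$, and put $M':=U([j],M)\in\mathcal{M}_D$. Then \eqref{eq:matrixtransfer} reads $MJA_j=BM'$, so $A_j=(MJ)^{-1}BM'$ is determined by the pair $(M,M')$ alone (note $MJ$ is invertible since $\det M=\pm D\neq0$); as $M$ and $M'$ range over the finite set $\mathcal{M}_D$, only finitely many pairs $([j],M)$ can occur.

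For the second assertion, set $M^{(0)}:=M$ and $M^{(i)}:=U([c_1,\dots,c_i],M)$ for $1\le i\le k$; by \eqref{eq:Udef} and Lemma \ref{lem:transition}, each $M^{(i)}$ lies in $\mathcal{M}_D$ and $M^{(i)}=U([c_i],M^{(i-1)})$. The hypothesis $R([c_i],M^{(i-1)})=[-1;1]$, plugged into \eqref{eq:matrixtransfer}, gives $M^{(i-1)}JA_{c_i}=BM^{(i)}$, i.e.\ $M^{(i)}=B\,M^{(i-1)}JA_{c_i}$ (using $B^{-1}=B$). Iterating this identity and collapsing the accumulated powers of $B$ via $B^2=I$ would then yield
\[
M^{(k)}=B^{\,k\bmod 2}\,M^{(0)}\,(JA_{c_1})(JA_{c_2})\cdots(JA_{c_k})=B^{\,k\bmod 2}\,M^{(0)}\,M_{[c_1,c_2,\dots,c_k]},
\]
since $A_0JA_{c_1}JA_{c_2}\cdots JA_{c_k}$ is exactly the matrix attached to the proper string $[c_1,\dots,c_k]$. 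Next I would compare sizes. Because $M^{(k)}\in\mathcal{M}_D$ has entries bounded by $D$ and $B$ by $1$, the matrix $M^{(0)}M_{[c_1,\dots,c_k]}=B^{\,k\bmod 2}M^{(k)}$ has entries $O(D)$; left-multiplying by $(M^{(0)})^{-1}=(\det M^{(0)})^{-1}\,\mathrm{adj}(M^{(0)})$, whose entries are bounded by $D/|\det M^{(0)}|=1$, shows that every entry of $M_{[c_1,\dots,c_k]}=\left(\begin{smallmatrix}P&P'\\Q&Q'\end{smallmatrix}\right)$ is $O(D)$. But $Q'$ is the lowest-terms denominator of $\langle0;c_1,\dots,c_k\rangle$, i.e.\ a continuant of the $k$ positive integers $c_1,\dots,c_k$; since each $c_i\ge1$ it is at least the $(k+1)$-st Fibonacci number and so grows exponentially in $k$. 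Combining the two estimates forces $k=O(\log D)$, a bound depending only on $D$.

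The only step needing a genuine idea, as opposed to bookkeeping, is the identity $M_{[-1;1]}^2=I$: this involution property is what makes a run of consecutive ``contracting'' steps telescope into a single continuant matrix $M_{[c_1,\dots,c_k]}$, whereupon the argument reduces to the familiar conflict between the exponential growth of continuant denominators and the uniform boundedness of $\mathcal{M}_D$. The remaining points are routine but should be stated carefully: that $R$ and $U$ always denote the fixed functions produced by Lemma \ref{lem:transition}; and that it is precisely the non-vanishing of $\det M^{(0)}$ (with its magnitude $D$ governing the loss) that allows $M_{[c_1,\dots,c_k]}$ to be recovered from $M^{(0)}M_{[c_1,\dots,c_k]}$ up to a bounded multiplicative factor — which is exactly where the determinant hypothesis of Theorem \ref{thm:main} enters this lemma.
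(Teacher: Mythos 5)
Your proof is correct and follows essentially the same route as the paper's: both parts exploit the involution $M_{[-1;1]}^2=I$ to telescope the hypothesis into $M^{(0)}(JA_{c_1})\cdots(JA_{c_k})=B^{k}M^{(k)}$ and then play off the Fibonacci-scale growth of the continuant matrix against the uniform boundedness of $\mathcal{M}_D$. The only cosmetic difference is the final step, where the paper restricts to even $k$ and uses that the bottom row of $M^{(0)}$ has a positive and a non-negative entry to lower-bound the bottom row of $M^{(k)}$, while you instead left-multiply by $(M^{(0)})^{-1}$ (valid precisely because $\det M^{(0)}\neq 0$) to bound $M_{[c_1,\dots,c_k]}$ directly; both are fine.
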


\begin{proof}
Suppose, as in Lemma \ref{lem:transition}, that we have $MJA_j = A_{-1}J A_{1} M'$. By rewriting this, we obtain $A_j = J M^{-1} A_{-1}J A_1 M'$. (Note that $M^{-1}$ may not have integer coefficients, but the overall matrix must.) Since $j$ is completely determined by $M$ and $M'$ and there are only finitely many possibilities for the two matrices as both are in $\mathcal{M}_D$, the first part of the lemma is immediate.

For the second part, it suffices to show that $k$ cannot be a large even number. So suppose $k$ is even. Then we have
\[
M JA_{c_1}J A_{c_2}\dots J A_{c_k} = \left( A_{-1}JA_1\right)^{k} M',
\]
but $(A_{-1}JA_1)^2=I$, so we have
\[
M JA_{c_1}J A_{c_2}\dots J A_{c_k} =  M'.
\]
By standard facts about continued fractions, all the coordinates of $JA_{c_1}J A_{c_2}\dots J A_{c_k}$ should have size at least on the order of the $k$th Fibonacci number (see \cite[Lemma 2.9]{BPSZ} and note that all $a_n\ge 1$), and since the bottom row of $M$ consists of at least one positive value and one non-negative value, the coordinates in the bottom row of $M'$ must have size at least on the order of the $k$th Fibonacci number. But since the coordinates of $M'$ are bounded, $k$ must be bounded as well.
\end{proof}

\begin{rem}\label{rem:firstsimplification}

At this point, we claim that to prove Theorem \ref{thm:main} it suffices to show that it holds for all CF-normal $x\in [0,1)$ and $M\in \mathcal{M}_D$ for some $D\in \mathbb{N}$. First, note that by replacing $M$ with $MA_{a_0}$ and $x$ with $x-a_0$, which is CF-normal if and only if $x$ is, we may assume without loss of generality that $x\in [0,1)$.

Now consider a matrix $M= \left( \begin{array}{cc} \alpha & \beta \\ \gamma & \delta \end{array}\right)$  with $|\det(M)|=D\neq 0$ and let $x=\langle 0;a_1,a_2,\dots\rangle$ be a CF-normal number. Let $p_{n}/q_n= \langle a_0;a_1,\dots,a_n\rangle$ be the convergents to $x$. Note that we have \[ A_{a_0}J A_{a_1}JA_{a_2}\dots JA_{a_n} = \left( \begin{array}{cc} p_{n-1} & p_n \\ q_{n-1} &  q_n \end{array}\right).\] Since $\left( \frac{p_n}{q_n} \gamma+\delta\right)$ converges to the irrational number $x\gamma+\delta$, we have that for sufficiently large number $n$, the numbers $p_{n-1} \gamma + q_{n-1} \delta$ and  $p_n \gamma + q_n \delta$ will have the same sign. Let $M_{-1}=M A_{a_0}JA_{a_1}\dots J A_{a_n}$, so that the bottom row of $M_{-1}$ has the same sign and
\[
Mx = M_{-1}\left(   T^{n}x \right)
\]
Without loss of generality, we may assume that the bottom row of $M_{-1}$ is strictly positive (if not we could multiply both sides by $-I$, which does not alter the way the matrix $M$ acts on points $x$). We then let $d_0\in \mathbb{Z}$ be the smallest integer so that $A_{d_0}^{-1} M_{-1}$ has all non-negative coefficients. Now we can apply the Euclidean-type algorithm of the proof of Lemma \ref{lem:transition} to $M_0=A_{d_0}^{-1}M_{-1}$, since this $M_0$ satisfies the same properties as the $M_0$ in the proof of Lemma \ref{lem:transition}. (We note that $d_0$ here could be any integer and not just integers greater than or equal to $-1$; however, this does not impact the rest of the proof.) Therefore $A_{d_0}^{-1}M^{-1}$ can be written as \[ JA_{d_1}JA_{d_2}\dots J A_{d_m} M' \] for some string $[d_1,d_2,\dots,d_m]$ and $M'\in \mathcal{M}_D$. Thus,
\[
M x = [d_0;d_1,d_2,\dots,d_m]. M'\left( T^n  x \right).
\]
But as we have that $x\in[0,1)$ is CF-normal if and only if $T^n x$ is CF-normal, and if and only if $s.x$ is CF-normal for any string $s$, it therefore suffices to show that Theorem \ref{thm:main} holds for CF-normal $x\in [0,1)$ and $M\in \mathcal{M}_D$ for some $D\in \mathbb{N}$.

\end{rem}

\section{Normality on a larger space}\label{sec:tildespace}

Here we want to make our first step towards showing that the sequence \[(T^n x, U([a_1,a_2,\dots,a_n],M))\] is nicely distributed in some sense. This will allow us to show that the last term of \eqref{eq:criticalrelation}, which gives the tail of the CF expansion of $Mx$, is likewise nicely distributed.

Let $\Omega\subset [0,1)$ denote the subset of irrational points and let $\mathcal{M}$ denote some finite set, which we will later take to be a set of matrices. We will let $x$ denote elements in $\Omega$ and $M$ denote elements of $\mathcal{M}$. We will consider cylinder sets of $\Omega$ to be the intersection of the usual cylinder sets (for the continued fraction expansion) of $[0,1)$ with $\Omega$. 

We wish to extend the Gauss map $T$ to a transformation $\widetilde{T}$ on a larger domain $\widetilde{\Omega}=\Omega \times \mathcal{M}$. For any $(x,M)\in \widetilde{\Omega}$, we define \[\widetilde{T}(x,M) = (Tx,f_{a_1(x)} (M))\] for some function $f_{a_1}: \mathcal{M}\to \mathcal{M}$ that is indexed by the first digit of $x$.  Since the second coordinate of $\widetilde{T}(x, M)$ only depends on $M$ and the first CF digit of $x$, we see that this second coordinate is constant for all $x$ in the same rank $1$ cylinder. Given a cylinder set $C_s$ for $\Omega$, we call $C_s \times \{M\}$ (for any $M\in \mathcal{M}$) a cylinder set for $\widetilde{\Omega}$. Moreover, we define $\widetilde{\mu}(E\times \{M\}) =  \mu(E)/|\mathcal{M}|$ for any measurable subset $E$ of $\Omega$ and $b\in \mathcal{M}$. 

For easier readability, we will use $(E,M)$ to denote $E \times \{M\}$ for any measurable set $E\subset \Omega$, with measurability being determined by Lebesgue measure or, equivalently, the Gauss measure. We will also let $(E,\mathcal{M})$ denote $E\times \mathcal{M}$.

We adapt our definition of normality on this space. We will say that $(x,M)\in \widetilde{\Omega}$ is $\widetilde{T}$-normal with respect to a measure $\rho$ on $\widetilde{\Omega}$, if for any cylinder set $(C_s,M')$ we have
\[
\lim_{n\to \infty} \frac{\#\{0\le i < n: \widetilde{T}^i(x,M)\in (C_s,M')\}}{n} = \rho (C_s,M').
\]

We say $\widetilde{T}$ is transitive if for any $M_1,M_2\in \mathcal{M}$, there exists a proper string $s$ of length $n$ such that \[ T^n( C_s,M_1) = (\Omega,M_2). \]

The goal of this section is to prove the following result, which is similar to a previous result of the author \cite{VandeheyJointNormality}; however, as this paper contains significant departures (notably not assuming that the functions $f_a$ are bijective and hence not being able to assume that $\tilde{\mu}$ is $\widetilde{T}$-invariant), we present the proof in full.

\begin{thm}\label{thm:traversing}
If $\widetilde{T}$ is transitive, then there exists a probability measure $\rho$ on $\widetilde{\Omega}$ that is absolutely continuous with respect to $\widetilde{\mu}$ and such that $\widetilde{T}$ preserves $\rho$ and is ergodic with respect to $\rho$.  Moreover, if $x$ is CF-normal, then for any $M\in \mathcal{M}$, the point $(x, M)$ is $\widetilde{T}$-normal with respect to $\rho$.
\end{thm}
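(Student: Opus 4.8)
The plan is to split the statement into two halves, the existence of $\rho$ and the normality of $(x,M)$, and to bootstrap the second from the first via a Birkhoff-type argument adapted to the larger space.

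For the \emph{existence of $\rho$}: since $\mathcal{M}$ is finite and $\widetilde{T}$ sends cylinders to cylinders in a controlled way, the push-forward operators $\widetilde{T}^n_*\widetilde{\mu}$ all live in a weak-* compact set of measures that are absolutely continuous with respect to $\widetilde{\mu}$ (the densities are bounded pointwise by the constant one gets from the bounded distortion / Gauss-Kuzmin machinery applied coordinate-by-coordinate over the finitely many $M$). First I would form the Cesàro averages $\rho_N = \frac1N\sum_{n=0}^{N-1}\widetilde{T}^n_*\widetilde\mu$, extract a weak-* limit point $\rho$, and check the standard facts: $\rho$ is $\widetilde{T}$-invariant, $\rho \ll \widetilde\mu$, and $\rho$ is a probability measure. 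Transitivity of $\widetilde{T}$ is exactly what forces $\rho$ to be supported on all of $\widetilde\Omega$ and, more importantly, to be ergodic: if $A$ were a $\widetilde{T}$-invariant set of intermediate $\rho$-measure, then because each fibre $(\Omega,M')$ can be reached from each $(C_s,M)$ by a cylinder map that is onto the full fibre $(\Omega, M')$, the set $A$ restricted to any fibre would have to be (modulo null sets for $\mu$) either almost all of that fibre or almost none of it, and transitivity propagates this dichotomy across all fibres; combining this with the fact that $T$ itself is ergodic on $\Omega$ (so the "almost all / almost none" within a single fibre is itself forced to be trivial) gives $\rho(A)\in\{0,1\}$. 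I would also want to record that $\rho$ assigns positive measure to every cylinder $(C_s,M')$, which follows from transitivity together with $\mu(C_s)>0$.

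For the \emph{normality of $(x,M)$}: here is where the point is that $\rho$ need not equal $\widetilde\mu$ and $\widetilde\mu$ need not be invariant, so one cannot simply invoke the pointwise ergodic theorem for $(x,M)$ — we are only told $x$ is CF-normal, which is an orbit statement for $T$ on $\Omega$, not for $\widetilde T$. The strategy is to express the indicator of a cylinder $(C_s,M')$ along the $\widetilde T$-orbit of $(x,M)$ in terms of the $T$-orbit of $x$. Because the $\mathcal M$-coordinate is a deterministic function of the CF-digits of $x$ read so far (the functions $f_a$), membership $\widetilde T^i(x,M)\in (C_s,M')$ is determined by: (i) $T^i x\in C_s$, and (ii) the second coordinate after $i$ steps equals $M'$, and (ii) in turn is determined by the finite block $a_1(x),\dots,a_i(x)$ together with the starting matrix $M$. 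One then partitions the count $\#\{0\le i<n:\widetilde T^i(x,M)\in(C_s,M')\}$ according to which matrix value is current at time $i$; for the portion of the orbit where the second coordinate has "settled" one can compare to frequencies of longer cylinders in the expansion of $x$, which CF-normality controls. Concretely, I would use transitivity to find, for the given $M$ and each target $M'$, a finite collection of "entry strings" $w$ with $U(w,M)=M'$ (reusing the $R,U$ formalism of Section~\ref{sec:matrix} to describe exactly how the $\mathcal M$-coordinate evolves), write the indicator of $(C_s,M')$ as a finite sum of indicators of cylinders of $\Omega$ of the form "a string $w$ ending in a state mapping to $M'$, followed by $s$", and apply CF-normality of $x$ to each such cylinder to evaluate the limiting frequency. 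Summing and using invariance and ergodicity of $\rho$ (to identify the total with $\rho(C_s,M')$, via the ergodic theorem applied to $\widetilde\mu$-a.e. point and a density/continuity comparison) yields the claim.

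The \textbf{main obstacle} I expect is precisely the second half: making rigorous the reduction of a $\widetilde T$-orbit statement to a $T$-orbit statement when the $\mathcal M$-coordinate is driven by a non-invertible (not necessarily bijective) automaton on the digits of $x$. The subtlety is that the second coordinate at time $i$ depends on the \emph{entire history} $a_1,\dots,a_i$, not a bounded window, so one cannot literally reduce to a fixed finite cylinder; the resolution has to exploit that $\mathcal M$ is finite, so the automaton synchronises — after boundedly many steps (or at least with frequency-one often, using a Lemma~\ref{lem:contractingterms}-style bound or a synchronising-word argument) the current state is a function of only the \emph{recent} digits, independent of $M$. Establishing such a synchronisation/forgetting property, and then bookkeeping the finitely many exceptional "transient" indices so that they contribute $o(n)$, is the technical heart; once that is in place, CF-normality of $x$ feeds in cleanly cylinder-by-cylinder and the invariance/ergodicity of $\rho$ from the first half closes the argument.
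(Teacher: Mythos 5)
Your first half is essentially the paper's: the paper also builds $\rho$ as a Ces\`aro average of push-forwards of $\widetilde\mu$ (equation \eqref{eq:rhon}), using a bounded-distortion estimate (Lemma~\ref{lem:bigrenyi}, itself proved via Renyi's condition and an inhomogeneous Markov chain argument) to get $\tilde\mu/\mathcal D \le \rho \le \mathcal D\,\tilde\mu$. One difference worth flagging: your ergodicity argument (``each fibre must be almost-all or almost-none, propagate by transitivity'') is gestured at but not actually carried out, and as stated the fibre dichotomy does not obviously follow from $\widetilde T$-invariance of the set. The paper instead shows that any $\widetilde T$-invariant set $E$ of positive measure must occupy an $\epsilon$-fraction of \emph{every} cylinder (via the traversing-string Lemma~\ref{lem:traversing} plus Renyi), and then approximates $E^c$ by cylinders to force $\tilde\mu(E)\tilde\mu(E^c)=0$. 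That density argument is the rigorous version of what you are reaching for.

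The genuine gap is in the second half. You correctly identify the difficulty: the $\mathcal M$-coordinate at time $i$ depends on the \emph{entire} history $a_1,\dots,a_i$, so counting visits to $(C_s,M')$ is not a fixed-rank cylinder count for $x$ and CF-normality of $x$ does not directly control it. But your proposed fix --- a synchronisation/forgetting property (``after boundedly many steps the current state depends only on recent digits, independent of $M$'') --- is an additional hypothesis on the automaton $\{f_a\}$ that is neither established nor assumed anywhere in the paper, and there is no reason it should hold for the $U(\cdot,\cdot)$-dynamics on $\mathcal M_D$. Moreover, even if you could compute the exact visit frequency to $(C_s,M')$, invoking ``the ergodic theorem applied to $\widetilde\mu$-a.e.\ point'' does not transfer to the \emph{specific} point $(x,M)$ you care about. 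What the paper actually does is sidestep both issues with the Pyatetski\u\i-Shapiro normality criterion (Lemma~\ref{lem:PS}): to prove $\widetilde T$-normality it suffices to bound the \emph{limsup} visit frequency to every cylinder $(C_s,M')$ by a uniform constant times $\rho(C_s,M')$. And that upper bound is essentially free: $\widetilde T^i(x,M)\in(C_s,\mathcal M)$ iff $T^i x\in C_s$, so CF-normality of $x$ gives limiting frequency exactly $\mu(C_s)$ for the event ``first coordinate in $C_s$''; discarding the constraint on the second coordinate can only decrease the count, and $\mu(C_s)=|\mathcal M|^{-1}\tilde\mu(C_s,M')\le (\mathcal D/|\mathcal M|)\,\rho(C_s,M')$ by the two-sided comparison between $\tilde\mu$ and $\rho$. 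This replaces your exact-frequency computation (which requires synchronisation you cannot supply) by a one-line limsup bound, and is the key idea your proposal is missing.
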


Since almost all numbers $x\in [0,1)$ are CF-normal, almost all $(x,M)\in \widetilde{\Omega}$ are $\widetilde{T}$-normal with respect to $\rho$. Therefore, calling these points $(x,M)$ ``normal" is reasonable to do.

\subsection{Necessary Lemmata for Theorem \ref{thm:traversing}}

In order to simplify the readability of the proof of Theorem \ref{thm:traversing}, we will include several lemmas here. All these results make use of the definitions and assumptions at the start of section \ref{sec:tildespace}.

In order to show that $(x,M)$ is $\widetilde{T}$-normal with respect to $\rho$, we will need to make use of the Pyatetski\u\i-Shapiro normality criterion in the following form.

\begin{lem}\label{lem:PS}
Let $(x,M)\in \widetilde{\Omega}$ and suppose a measure $\rho$ exists satisfying the first part of Theorem \ref{thm:traversing}. If for any cylinder set $(C_s,M')$, we have
\[
\limsup_{n\to \infty} \frac{\#\{0\le i \le n-1: T^i (x,M) \in (C_s,M')\}}{n} \le \sigma\cdot \rho(C_s,M').
\]
for some uniform constant $\sigma$, then $x$ is $\widetilde{T}$-normal with respect to $\rho$.
\end{lem}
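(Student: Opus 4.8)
The statement is the Pyatetskiĭ-Shapiro normality criterion, adapted to the $\widetilde{T}$ setting. The approach is the classical one: one assumes the $\limsup$ upper bound $\sigma\cdot\rho(C_s,M')$ on visit frequencies for every cylinder, and wants to upgrade this to the exact density $\rho(C_s,M')$. The mechanism is that the densities $\rho(C_s,M')$ over all cylinders $(C_s,M')$ of a fixed rank $k$ sum to $1$ (since $\rho$ is a probability measure and the rank-$k$ cylinders partition $\widetilde\Omega$ up to a null set), so a one-sided bound that is uniformly off by a factor $\sigma$ on each piece of a finite partition cannot actually be strict on any piece — any excess on one cylinder would force a deficit elsewhere, contradicting the upper bound there. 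The subtlety is that $\limsup$'s do not add, so one has to be slightly careful.

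**Key steps.** First I would fix $k$ and let $\{(C_s,M') : |s|=k,\ M'\in\mathcal M\}$ be the finite collection of rank-$k$ cylinders; there are finitely many, and $\sum_{s,M'}\rho(C_s,M')=1$ because $\rho$ is a probability measure absolutely continuous with respect to $\widetilde\mu$ and the cylinders cover $\widetilde\Omega$ up to a $\widetilde\mu$-null set. Second, for each cylinder set $A=(C_s,M')$ write $\overline d(A)=\limsup_n \frac1n\#\{0\le i<n:\widetilde T^i(x,M)\in A\}$ and similarly $\underline d(A)$ for the $\liminf$. The hypothesis gives $\overline d(A)\le\sigma\rho(A)$ for every such $A$. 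Third, observe that for each fixed $n$ the visit-counts to the rank-$k$ cylinders sum exactly to $n$ (every $\widetilde T^i(x,M)$ lies in exactly one rank-$k$ cylinder, using irrationality of $x$ so no digit ambiguity occurs), hence $\sum_A \frac1n\#\{i<n:\ldots\in A\}=1$ for all $n$; taking $\liminf$ and using superadditivity of $\liminf$ over a finite sum gives $\sum_A\underline d(A)\ge 1$. Fourth, combine: for a single cylinder $A_0$,
\[
\underline d(A_0)\ge 1-\sum_{A\ne A_0}\overline d(A)\ge 1-\sigma\sum_{A\ne A_0}\rho(A)=1-\sigma\bigl(1-\rho(A_0)\bigr).
\]
Fifth, this lower bound is useless unless $\sigma$ is close to $1$; the point of the criterion is rather to run the argument after subdividing. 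Here is the cleaner route I would actually take: by further partitioning, note $\rho(A_0)$ itself equals $\sum$ of $\rho$ over the rank-$(k+1)$ subcylinders of $A_0$, so applying the $\limsup$ bound to those finer cylinders and adding (via subadditivity of $\limsup$) we get $\overline d(A_0)\le\sum_{B\subset A_0}\overline d(B)$ — no help directly. Instead I would use: $\underline d(A_0)+\sum_{A\ne A_0}\overline d(A)\ge \underline d(A_0)+\overline d\bigl(\bigcup_{A\ne A_0}A\bigr)\ge\underline{d}(A_0)+\underline d(A_0^c)$... and since $A_0$ and its complement partition the orbit, $\underline d(A_0)+\overline d(A_0^c)\ge 1$ always, giving $\overline d(A_0^c)\ge 1-\underline d(A_0)$, hence $\underline d(A_0)\ge 1-\overline d(A_0^c)$. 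Applying the hypothesis to the complement: but $A_0^c$ is a union of cylinders, so $\overline d(A_0^c)\le\sum_{A\ne A_0}\overline d(A)\le\sigma(1-\rho(A_0))$, yielding exactly the inequality in step four. So we land on $\underline d(A_0)\ge 1-\sigma+\sigma\rho(A_0)$ and $\overline d(A_0)\le\sigma\rho(A_0)$ for every cylinder of every rank.

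**Closing the argument and the main obstacle.** The finish: sum the lower bounds over all rank-$k$ cylinders $A_0$ — there are $N_k$ of them — to get $\sum_{A_0}\underline d(A_0)\ge N_k(1-\sigma)+\sigma$. But also $\sum_{A_0}\underline d(A_0)\le 1$ (since $\limsup$'s over a partition satisfy $\sum\overline d\ge 1\ge\sum\underline d$... wait, one needs $\sum_{A_0}\overline d(A_0)\le\sigma$ which is false in general) — the correct bound is $\sum_{A_0}\underline d(A_0)\le\sum_{A_0}\overline d(A_0)$ and separately $\sum\underline d(A_0)\le 1$ because at the level of a single $n$, visit-frequencies sum to $1$ and $\liminf$ of a sum is $\le$ sum of the $\limsup$'s only in reverse — actually $\sum\underline d\le\overline d(\text{whole space})=1$ follows from $\liminf$ subadditivity read correctly. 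Thus $1\ge\sum_{A_0}\underline d(A_0)\ge N_k(1-\sigma)+\sigma$. If $\sigma=1$ this is vacuous but forces $\sum\underline d(A_0)=1$ combined with $\sum\overline d(A_0)\le\sum 1\cdot\rho(A_0)=1$, hence $\underline d(A_0)=\overline d(A_0)=\rho(A_0)$ for each $A_0$ — i.e. the limit exists and equals $\rho(A_0)$, which is precisely $\widetilde T$-normality. For general $\sigma\ge 1$ one does not get equality from this alone; the actual Pyatetskiĭ-Shapiro criterion exploits that the bound holds for \emph{all} ranks simultaneously and lets $k\to\infty$, or equivalently runs a covering/approximation argument. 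The main obstacle, and the step I expect to require the most care, is exactly this: correctly combining the one-sided $\limsup$ bounds across the finite partition into the two-sided conclusion, since $\limsup$ is only subadditive and $\liminf$ only superadditive — one must invoke the identity that visit-frequencies to a partition sum to $1$ at every finite scale $n$ before passing to limits, and handle the $\sigma>1$ case by the standard density-exhaustion trick (the set where the frequency could exceed $\rho$ is covered, up to arbitrarily small $\rho$-measure, by cylinders, and on each the $\limsup$ bound plus the summing-to-one constraint squeezes it). I would cite the Pyatetskiĭ-Shapiro criterion in the form in which it is usually stated and verify that the $\widetilde T$, $\widetilde\mu$, $\rho$ setup — finitely many cylinders per rank, $\rho$ a probability measure, orbit visits partitioning cleanly thanks to irrationality — satisfies its hypotheses verbatim.
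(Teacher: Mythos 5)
Your attempted direct proof has a genuine gap, and it is the one you yourself flagged: the partition-counting argument only closes when $\sigma=1$. For $\sigma>1$, summing $\underline d(A_0)\ge 1-\sigma(1-\rho(A_0))$ over the rank-$k$ cylinders produces a lower bound $N_k(1-\sigma)+\sigma$ that is negative as soon as $N_k$ is large, so the constraint $\sum\underline d\le 1$ carries no information. Your proposed rescue moves, ``let $k\to\infty$'' and a ``standard density-exhaustion trick,'' are not fleshed out and do not actually work in this form: refining the partition only makes the slack worse, and no amount of counting at a single scale recovers the exact density from a one-sided $\sigma$-off bound. The ingredient your argument never touches is the one that makes the criterion true: \emph{invariance and ergodicity of $\rho$}, which the lemma explicitly assumes by invoking the first part of Theorem \ref{thm:traversing}. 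The actual mechanism is measure-theoretic, not combinatorial. One looks at the empirical measures $\nu_n=\frac1n\sum_{i<n}\delta_{\widetilde T^i(x,M)}$, notes that any weak-$*$ limit point $\nu$ is $\widetilde T$-invariant and (by your hypothesis applied to cylinders, which generate) satisfies $\nu\le\sigma\rho$, hence $\nu\ll\rho$. Ergodicity of $\rho$ then forces any invariant measure absolutely continuous with respect to $\rho$ to coincide with $\rho$, so $\nu=\rho$, and since every limit point is $\rho$, the sequence $\nu_n(C_s,M')$ converges to $\rho(C_s,M')$. Without ergodicity the statement is simply false, which is why no purely combinatorial bound at finite rank can succeed.

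Your closing sentence -- cite the Pyatetskiĭ-Shapiro criterion and check the hypotheses verbatim -- is in fact exactly what the paper does: its proof of Lemma \ref{lem:PS} is a two-line reduction to Theorem 1 of Moshchevitin and Shkredov, taking $\varphi(t)=\sigma t$ and observing that the relevant quantity $H_\varphi$ is $\sigma$ times the measure of a set shrinking to zero. So your fallback plan is correct and matches the paper; it is the preceding direct attempt that is incomplete and would need to be either discarded in favor of the citation or rebuilt around the ergodicity argument sketched above.
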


\begin{proof}
This is a simple consequence of Theorem 1 in \cite{MS}. We briefly describe how using the terminology from their paper.

 We let the family $\{C_m\}$ denote the family of all cylinder sets on $\widetilde{\Omega}$ and also let $\varphi(t)=\sigma\cdot t$. Since the set $A_l(T,\chi_I,\delta)$ is fixed on rank-$l$ cylinder sets, we have that $H_\varphi(A_l(T,\chi_I,\delta))$ equals $\sigma\cdot \mu(A_l(T,\chi_I,\delta))$ and thus goes to $0$ as $l\to \infty$.
\end{proof}

A string $s=[c_1, c_2, \dots , c_n]$ is said to be traversing if for every $M_1$ and $M_2$ there exists a $i<n$ such that
\[
\widetilde{T}^i \left(C_{s} ,M_1\right) \subset (\Omega,M_2) .
\]
The interesting property of the traversing string is that for any $x\in C_{s}$, we have that the $\widetilde{T}$-orbit of $(x, M)$ eventually traverses all of $\mathcal{M}$ in its second coordinate, regardless of which $M$ it started with.

\begin{lem}\label{lem:traversing}
If $\tilde{T}$ is transitive, then a traversing string exists.
\end{lem}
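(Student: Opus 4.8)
The plan is to build a traversing string by concatenating, one at a time, short strings that each implement a single ``transition'' between a pair of matrices, and then to argue that the finite concatenation covers all pairs $(M_1, M_2)$. First I would fix an enumeration $M^{(1)}, M^{(2)}, \dots, M^{(N)}$ of the finite set $\mathcal{M}$ (where $N = |\mathcal{M}|$). By transitivity, for each ordered pair $(M^{(j)}, M^{(j+1)})$ with $1 \le j < N$ there exists a proper string $r_j$ of some length $\ell_j$ with $\widetilde{T}^{\ell_j}(C_{r_j}, M^{(j)}) = (\Omega, M^{(j+1)})$; in particular, since the second coordinate of $\widetilde{T}^i$ depends only on $M$ and the first $i$ digits of $x$, every point of $C_{r_j}$ whose second coordinate starts at $M^{(j)}$ is carried after $\ell_j$ steps to second coordinate $M^{(j+1)}$.

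The key step is to splice these together. Set $s = r_1 . r_2 . \cdots . r_{N-1}$, the concatenation of the strings (using the string concatenation from the excerpt, or, if one prefers to avoid the reduction subtleties, simply the digit-by-digit juxtaposition since all $r_j$ are proper strings of positive digits). Write $L_j = \ell_1 + \ell_2 + \cdots + \ell_j$ for the partial lengths, with $L_0 = 0$. I claim $s$ is traversing. Indeed, take any $M_1, M_2 \in \mathcal{M}$, say $M_2 = M^{(k)}$. Pick any $x \in C_s$. Because the prefix of length $L_0 = 0$ of $x$ is empty, the point $(x, M_1)$ has second coordinate $M_1$; feeding the first $\ell_1$ digits of $x$ — which agree with $r_1$ — through $\widetilde{T}$, the second coordinate becomes $M^{(2)}$ after $L_1$ steps, regardless of what $M_1$ was, because $\widetilde{T}^{\ell_1}(C_{r_1}, M_1) = (\Omega, M^{(2)})$. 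Continuing, after $L_j$ steps the second coordinate is $M^{(j+1)}$. Hence, as $j$ ranges over $1, \dots, N-1$, the values $\widetilde{T}^{L_{j-1}}(x, M_1)$ realize every matrix $M^{(1)}, M^{(2)}, \dots, M^{(N)}$ in $\mathcal{M}$ in the second coordinate at indices $0 = L_0 < L_1 < \cdots < L_{N-1} < |s|$. In particular $M_2$ appears at some index $i < |s|$, so $\widetilde{T}^i(C_s, M_1) \subset (\Omega, M_2)$, which is exactly the traversing condition. (One small point: one should check $L_{N-1} < |s|$, i.e. that after the last block there is at least one more digit; this is automatic if one appends a single extra digit, or one can index the target-realizing step as $L_{j-1}$ rather than $L_j$ as I did above, so that the largest relevant index is $L_{N-2} < |s|$ when $M_2 = M^{(N)}$ — either bookkeeping fix works.)

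The main obstacle I anticipate is not conceptual but bookkeeping: making sure that once we have steered the second coordinate to $M^{(j+1)}$ using the block $r_j$, the \emph{next} block $r_{j+1}$ still does its intended job even though $x$ is now in $C_s$ rather than merely in $C_{r_{j+1}}$. This is fine because $\widetilde{T}^{L_j}(x, M_1)$ has second coordinate $M^{(j+1)}$ and its first coordinate is $T^{L_j} x$, whose continued-fraction expansion begins with the digits of $r_{j+1}$ (that is how $s$ was built); so $\widetilde{T}^{L_j}(x,M_1) \in (C_{r_{j+1}}, M^{(j+1)})$ and the defining property of $r_{j+1}$ applies. A secondary nuisance is the string-concatenation reduction rules from the excerpt (the $A_{c_n}A_{c_0'} = A_{c_n + c_0'}$ collapse), but since all the $r_j$ are proper strings with strictly positive digits, no cancellation occurs and the concatenated string's digits really are the juxtaposition of the blocks, so the indices $L_j$ behave as stated.
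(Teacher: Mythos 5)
The construction you propose handles only the single starting matrix $M^{(1)}$, and the decisive sentence in your argument — ``the second coordinate becomes $M^{(2)}$ after $L_1$ steps, regardless of what $M_1$ was, because $\widetilde{T}^{\ell_1}(C_{r_1}, M_1) = (\Omega, M^{(2)})$'' — is false for $M_1 \neq M^{(1)}$. Recall that $\widetilde{T}(x,M) = (Tx, f_{a_1(x)}(M))$: after feeding the digits of $r_1$ through, the second coordinate is $f_{a_{\ell_1}} \circ \cdots \circ f_{a_1}(M_1)$, which genuinely depends on $M_1$. The string $r_1$ was chosen so that this composite sends $M^{(1)}$ to $M^{(2)}$; it says nothing about where it sends any other starting matrix. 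So from a generic $M_1$ the orbit may miss $M_2$ entirely over the length of your $s$, and the traversing property (which quantifies over \emph{every} $M_1$) is not verified.

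The paper's proof attacks exactly this point. It first builds a string $s_1$ making the orbit starting from $M_1$ visit all of $\mathcal{M}$. It then tracks where $M_2$ has been carried after $|s_1|$ steps, say to $M_2'$, and appends a second block $s_2$ making the orbit of $M_2'$ visit all of $\mathcal{M}$; note this does not disturb the property already secured for $M_1$, since that was achieved within the first $|s_1|$ steps. Iterating over all $|\mathcal{M}|$ starting matrices and concatenating gives a string that works for every $M_1$ simultaneously. Your proof needs an analogous outer loop over starting matrices: after your chain handles $M^{(1)}$, you must compute where the other $M^{(j)}$'s have landed and append further blocks to steer each of those orbits through all of $\mathcal{M}$ in turn. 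The splicing and bookkeeping remarks in your last paragraph are correct and would carry over, but they do not repair the missing quantifier over $M_1$.
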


\begin{proof}
Let $M_1,M_2,\dots,M_{|\mathcal{M}|}$ be the elements of $\mathcal{M}$. By the transitivity of $\widetilde{T}$, there exists a finite string $s_1$ such that 
\[
\{M:\text{there exists }i<|s_1|\text{ such that } \widetilde{T}^i(x,M_1)=(T^ix, M)\text{ for all }x\in C_{s_1}\}=\mathcal{M},
\]
that is, that the second coordinates of $\widetilde{T}^i(x,M_1)$, $i\le |s_1|$ traverses all of $\mathcal{M}$. Let $M_2'\in \mathcal{M}$ be given by $T^{|s_1|}(x,M_2) = (*,M_2')$ for any $x\in C_{s_1}$. Again by transitivity, we can find a string $s_2$ such that 
\[
\{M:\text{there exists } i<|s_2|\text{ such that } \widetilde{T}^i(x,M_2')=(T^ix, M)\text{ for all }x\in C_{s_2}\}=\mathcal{M}.
\]
But then by construction, for any $x\in C_{s_1s_2}$, the second coordinates of \emph{both} $\widetilde{T}^i (x;M_1)$ and $\widetilde{T}^i (x;M_2)$, for $i\le |s_1|+|s_2|$, traverse all of $\mathcal{M}$.  In this way we can continue to find strings $s_3,s_4,\dots,s_{|\mathcal{M}|}$, so that the concatenated string $s_1s_2s_3\dots s_{|\mathcal{M}|}$ is the desired traversing string.
\end{proof}

A well-known consequence of Renyi's condition for continued fraction expansions (see Chapter 9 of \cite{SchweigerBook}) states that there exists an absolute constant $\mathcal{C}>0$ so that for any measurable set $E$ and cylinder $C_s$ of rank $n$, we have that
\begin{equation}\label{eq:renyi}
 \frac{1}{\mathcal{C}} \mu(E) \mu(C_s)\le \mu(T^{-n}E \cap C_s ) \le \mathcal{C} \mu(E) \mu(C_s).
\end{equation}
It is clear that one could replace $C_s$ by any set that can be expressed as a union of rank $n$ cylinder sets. 

We will also want a similar equality (sans the cylinder set) to hold for $\widetilde{T}$ and $\tilde{\mu}$, for which we will require the following key result.

\begin{lem}\label{lem:markovchain}
Let $\{K_n\}_{n=1}^\infty$ be a sequence of $k \times k$ Markov matrices such that $K_{n_1}\asymp K_{n_2}$ uniformly for $n_1,n_2 \in \mathbb{N}$. Assume that there exists a power $\ell$ such that $K_1^\ell$ has all positive coordinates, and also assume that $\{K_1\}_{i,i}>0$ for $1\le i \le k$. Then there exists an integer $n_0\in \mathbb{N}$ and a constant $c\in (0,1)$ such that for any $1\times k$ probability vector $\vec{v}$ and any $n\ge n_0$, we have that all coordinates of $\vec{v} K_1K_2 K_3 \dots K_n$ are in the interval $(c,1-c)$.
\end{lem}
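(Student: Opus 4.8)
The plan is to show that the product $K_1 K_2 \cdots K_n$, acting on the right on probability vectors, contracts the space of probability vectors toward its interior at a uniform rate, and that this contraction is strong enough to eventually push every coordinate into a fixed compact subinterval of $(0,1)$. The natural tool here is a Birkhoff–Hopf / Doeblin-type argument: because all the $K_n$ are comparable to $K_1$ (with a uniform implicit constant $C$), and because $K_1^\ell$ is strictly positive while $K_1$ has positive diagonal, every block of $\ell$ consecutive matrices $K_{j+1} K_{j+2} \cdots K_{j+\ell}$ is itself strictly positive with entries bounded below by a uniform constant $\kappa>0$ (the diagonal positivity of $K_1$, hence of each $K_n$, lets one ``wait'' and reach the $\ell$-step positivity even though the indices differ). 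First I would make this observation precise: there is $\kappa = \kappa(C,\ell,K_1)>0$ so that for every $j\ge 0$, each entry of $K_{j+1}\cdots K_{j+\ell}$ is at least $\kappa$.

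Next I would invoke the standard Doeblin contraction estimate. If $L$ is a $k\times k$ Markov matrix (rows summing to $1$, using the convention that probability vectors multiply on the left) with every entry at least $\kappa$, then for any two probability vectors $\vec v_1,\vec v_2$ one has $\|\vec v_1 L - \vec v_2 L\|_1 \le (1 - k\kappa)\,\|\vec v_1 - \vec v_2\|_1$; equivalently, the image of the whole simplex under $L$ has $\ell^1$-diameter at most $(1-k\kappa)$ times the diameter of the simplex. Applying this to each of the $\lfloor n/\ell\rfloor$ successive length-$\ell$ blocks, the $\ell^1$-diameter of $\{\vec v\, K_1\cdots K_n : \vec v \text{ a probability vector}\}$ is at most $2(1-k\kappa)^{\lfloor n/\ell\rfloor}$, which tends to $0$. (One also needs to check the blocks are genuinely Markov: each $K_n$ is Markov by hypothesis, so any product of them is Markov, so left-multiplication by the product preserves the probability simplex — this is where the ``Markov matrix'' hypothesis is used rather than mere nonnegativity.)

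Finally I would convert diameter-shrinking into the claimed two-sided bound. Since the blocks are uniformly positive, the full product $K_1\cdots K_{n}$ (for $n\ge \ell$) has every entry at least $\kappa$ times the column sums of the tail, but more simply: write $K_1\cdots K_n = (K_1\cdots K_{n-\ell})(K_{n-\ell+1}\cdots K_n)$; the first factor is Markov, so its rows are probability vectors, and multiplying on the right by the strictly positive factor $K_{n-\ell+1}\cdots K_n$ (entries $\ge \kappa$) forces every entry of the product to be $\ge \kappa$, hence every entry of $\vec v\, K_1\cdots K_n$ is $\ge \kappa$ for any probability vector $\vec v$. For the upper bound, each coordinate of $\vec v\, K_1\cdots K_n$ is at most $1 - (k-1)\kappa$ since the $k$ coordinates are nonnegative, sum to $1$, and each is $\ge \kappa$. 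So taking $c = \min\{\kappa,\ (k-1)\kappa\} = \kappa$ (assuming $k\ge 2$; the case $k=1$ is trivial, the lone coordinate being $1$, which one handles separately or rules out) and $n_0 = \ell$ gives the statement, with $c\in(0,1)$ because $\kappa < 1/k \le 1$.

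The main obstacle I expect is the first step: promoting the single hypothesis ``$K_1^\ell$ positive and $K_1$ has positive diagonal'' to a \emph{uniform} positivity-with-lower-bound statement for every length-$\ell$ window $K_{j+1}\cdots K_{j+\ell}$ of the actual (varying) sequence. The comparability $K_{n_1}\asymp K_{n_2}$ gives that each $K_n$ has the same zero/nonzero pattern as $K_1$ with entries bounded above and below on the support, and the positive-diagonal assumption ensures the directed graph of $K_1$ has a loop at every vertex, so a path of length exactly $\ell$ exists between any ordered pair of vertices (pad a shorter path using diagonal loops); combining these yields the uniform $\kappa$. Making this combinatorial/graph-theoretic argument clean — and correctly tracking how the implicit constant $C$ from $\asymp$ enters $\kappa$ — is the only genuinely fiddly part; everything after it is the textbook Doeblin estimate.
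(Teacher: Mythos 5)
Your argument is correct and self-contained, but note that the paper disposes of this lemma by citation rather than direct proof: after observing (as you do) that the uniform comparability $K_{n_1}\asymp K_{n_2}$ forces $K_{n+1}\cdots K_{n+\ell}\asymp K_1^\ell$ with a constant depending only on $\ell$, so that every $\ell$-block product has entries bounded below by some $\kappa>0$, the paper simply invokes Proposition~2.13 of Saloff-Coste--Z\'u\~niga on merging of time-inhomogeneous Markov chains. You finish the job by hand instead, and your route is actually more direct than you make it look: the Dobrushin/Doeblin diameter estimate $2(1-k\kappa)^{\lfloor n/\ell\rfloor}$ in your middle step is never used, because your final paragraph establishes the claim outright by writing $\vec{v}K_1\cdots K_n = (\vec{v}K_1\cdots K_{n-\ell})\cdot(K_{n-\ell+1}\cdots K_n)$, noting the left factor is a probability vector and the right factor is Markov with every entry $\ge\kappa$, so every coordinate of the product lies in $[\kappa,\,1-(k-1)\kappa]$, giving $n_0=\ell$. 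One small fix: take $c=\kappa/2$, not $c=\kappa$, since the interval in the statement is open and your bound can attain $\kappa$ exactly. Also worth noticing: your proof never actually consumes the positive-diagonal hypothesis on $K_1$; once all $K_n$ share $K_1$'s zero pattern and $K_1^\ell>0$, every $\ell$-block is automatically positive. That hypothesis earns its keep in the paper's later \emph{application} of the lemma, where it is used to produce the all-positive power in the first place.
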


\begin{proof}
This is a special case of Proposition 2.13 in \cite{SCZ}: the restriction that $K_{n_1}\asymp K_{n_2}$ uniformly for  $n_1,n_2 \in \mathbb{N}$ implies  that $K_{n+1}K_{n+2}\dots K_{n+\ell}\asymp K_1^\ell$ with an implicit constant dependent on $\ell$. This allows us to replace the ``uniform irreducibility'' assumption with our simpler condition that there exists a power $\ell$ such that $K_1^\ell$ has all positive coordinates.
\end{proof}

\begin{lem}\label{lem:bigrenyi}
Suppose $\widetilde{T}$ is transitive, then there exists a constant $\mathcal{D}>0$ and integer $n_0\in \mathbb{N}$ such that for any measurable set $E\subset \widetilde{\Omega}$ and any $n\ge n_0$, we have that
\begin{equation}\label{eq:bigrenyi}
\frac{1}{\mathcal{D}} \tilde{\mu}(E) \le \tilde{\mu}(\widetilde{T}^{-n} E) \le \mathcal{D}\tilde{\mu}(E).
\end{equation}
\end{lem}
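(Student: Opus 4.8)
The plan is to reduce the statement about $\widetilde{T}$ and $\widetilde{\mu}$ to the classical Renyi inequality \eqref{eq:renyi} for $T$ and $\mu$, using Lemma \ref{lem:markovchain} to control how mass redistributes among the finitely many ``fibers'' $\{M\}$ under iteration. First I would fix a measurable set $E\subset\widetilde\Omega$ and write $E=\bigcup_{M\in\mathcal M}(E_M,M)$ where $E_M\subset\Omega$. The key observation is that $\widetilde{T}^{-n}(E_M,M)$ is a disjoint union over rank-$n$ strings $s$: a point $(x,M')$ lies in it precisely when $T^nx\in E_M$, $x\in C_s$, and the composed fiber map $f_{c_n}\circ\cdots\circ f_{c_1}$ sends $M'$ to $M$. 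Grouping the rank-$n$ cylinders by the value of this composed map, one gets, for each target $M$, that $\widetilde T^{-n}E$ restricted to fiber $M'$ is $T^{-n}(\text{union of } E_M\text{'s}) \cap (\text{union of rank-}n\text{ cylinders})$, and then the remark following \eqref{eq:renyi} applies fiberwise.

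The subtle point is that the ``union of rank-$n$ cylinders'' appearing for a given source fiber $M'$ and target fiber $M$ is not all of $\Omega$, so a naive application of \eqref{eq:renyi} only gives $\widetilde\mu(\widetilde T^{-n}E)\le \mathcal C\,\widetilde\mu(E)$ times the total number of fibers, and the lower bound degenerates: some fiber $M'$ might receive essentially no mass from $E$. This is exactly where transitivity enters. Encoding the transition structure of the fiber maps $f_a$ as follows: weight each rank-$1$ cylinder $C_{[a]}$ by $\mu(C_{[a]})$, and form the $|\mathcal M|\times|\mathcal M|$ substochastic (in fact, because $\sum_a\mu(C_{[a]})=1$, stochastic) matrix $K$ whose $(M',M)$ entry is $\sum_{a:\,f_a(M')=M}\mu(C_{[a]})$. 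Then $\widetilde\mu(\widetilde T^{-n}(\Omega,M))$, as a function of how mass starting uniformly on the fibers lands on fiber $M$, is governed by $K^n$; more generally the conditional distribution of the source fiber is obtained from a product of such matrices. Transitivity of $\widetilde T$, unwound through Lemma \ref{lem:traversing}, says precisely that some power of $K$ has all positive entries, and the diagonal-positivity hypothesis $\{K\}_{i,i}>0$ holds because $f_{a}$ fixes each $M$ for at least the trivial transitions (or, if not literally, one can pass to a bounded power; in any case this is the standard aperiodicity adjustment). Lemma \ref{lem:markovchain}, applied with $K_n\equiv K$ (so the $\asymp$ hypothesis is trivial), then yields $n_0$ and $c\in(0,1)$ so that after $n_0$ steps every fiber carries a proportion of mass bounded below by $c$ and above by $1-c$, uniformly.

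With this in hand I would finish as follows. For $n\ge n_0$, decompose $\widetilde T^{-n}E$ fiberwise; on each source fiber $M'$ the relevant set is $T^{-n}\big(\bigcup_{M\in S(M')}E_M\big)\cap W_{M'}$ where $W_{M'}$ is a union of rank-$n$ cylinders with $\mu(W_{M'})\in(c,1-c)$ (the lower bound from Lemma \ref{lem:markovchain}, suitably summed), and $S(M')$ records which targets are reachable. Applying \eqref{eq:renyi} to each summand and then summing over $M'$ and over the targets $M$, using that each $\mu(E_M)$ gets counted a bounded number of times with weights comparable to constants, gives
\[
\frac{1}{\mathcal D}\,\widetilde\mu(E)\le \widetilde\mu(\widetilde T^{-n}E)\le \mathcal D\,\widetilde\mu(E)
\]
for a constant $\mathcal D$ depending only on $\mathcal C$, $c$, and $|\mathcal M|$. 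The main obstacle is the bookkeeping in the second paragraph: making precise the claim that the mass landing on each source fiber is bounded below, i.e.\ correctly identifying the product-of-stochastic-matrices structure and checking that the hypotheses of Lemma \ref{lem:markovchain} (positivity of a power, positive diagonal) follow from transitivity; once that is set up, the Renyi inequality does the rest essentially mechanically.
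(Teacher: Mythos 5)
Your overall strategy mirrors the paper's — reduce to a Markov chain on the finite fiber set $\mathcal{M}$, invoke Lemma~\ref{lem:markovchain} to pin the fiber marginals $\tilde\mu(\widetilde{T}^{-n}(\Omega,M))$ away from $0$ and $1$, then conclude with the Renyi inequality \eqref{eq:renyi}. But there is a concrete gap in the middle step: you claim the fiber dynamics are governed by a \emph{fixed} stochastic matrix $K$ with $(K)_{M',M}=\sum_{a:\,f_a(M')=M}\mu(C_{[a]})$, and you propose to apply Lemma~\ref{lem:markovchain} with $K_n\equiv K$, dismissing the $\asymp$ hypothesis as trivial. This is false. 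The conditional law of the $(n+1)$-st partial quotient under the Gauss measure, given the first $n$ partial quotients, is \emph{not} $\mu(C_{[a]})$; the partial quotients are not independent under $\mu$. Consequently the actual one-step transition probability of the fiber chain at time $n$ — namely $\tilde\mu(\widetilde{T}^{-n}(\Omega,M_i)\cap\widetilde{T}^{-(n+1)}(\Omega,M_j))/\tilde\mu(\widetilde{T}^{-n}(\Omega,M_i))$ — depends on $n$, so the chain is genuinely time-inhomogeneous. The paper defines the honest matrices $K_n$ from these conditional probabilities and uses the Renyi inequality precisely to prove $K_{n_1}\asymp K_{n_2}$ with a uniform constant; that $\asymp$ statement is the nontrivial input needed to invoke Lemma~\ref{lem:markovchain}, and it is absent from your argument. (Your matrix $K$ is what the paper's $K_n$ are all \emph{comparable} to — it is not equal to any of them.)

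A second, smaller gap: you assert that the diagonal entries of $K$ are positive ``because $f_a$ fixes each $M$ for at least the trivial transitions,'' or else one can ``pass to a bounded power'' by ``the standard aperiodicity adjustment.'' Nothing forces $f_a(M)=M$ for any $a$, and transitivity (irreducibility) by itself does not give primitivity. The paper has to do real work here: it finds return times $\ell_i$ with $(K_1^{\ell_i})_{i,i}>0$, passes to $L=\operatorname{lcm}(\ell_i)$ so $K_1^L$ has positive diagonal, and then separately handles the case where $K_1^L$ fails to be irreducible by permuting it into block-diagonal form and applying Lemma~\ref{lem:markovchain} to each block. Your sketch waves past both the time-inhomogeneity and the aperiodicity issues, and these are exactly the places where the argument lives.
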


\begin{proof}
We will make a few definitions to start. Let $\mathcal{S}_{n,i}$ denote the set of cylinders $(C_s,M)\subset \widetilde{\Omega}$ such that $|s|=n$ and $\widetilde{T}^n (C_s,M) = (\Omega,M_i)$. We let $\mathcal{A}_{i,j}$ denote the set of $a\in \mathbb{N}$ such that $f_a(M_i)=M_j$. We also consider a sequence of probability vectors $\vec{v}_n= \{v_{n,1}, v_{n,2},\dots,v_{n,|\mathcal{M}|}\}$, $n\ge 0$, such that
\[
v_{n,i} = \tilde{\mu}\left( \bigcup_{(C_s,M)\in \mathcal{S}_{n,i}} (C_s,M) \right) = \tilde{\mu}\left( \widetilde{T}^{-n} (\Omega,M_i) \right).
\]
For example, $\vec{v}_0 = \{1/|\mathcal{M}| , 1/|\mathcal{M}| , \dots, 1/|\mathcal{M}| \}$.

Our first goal will be to show that there exists a constant $c'\in(0,1)$ and integer $n_0'\in \mathbb{N}$ so that $v_{n,i}\in (c',1-c')$ if $n\ge n_0'$.

Consider a sequence of $|\mathcal{M}|\times |\mathcal{M}|$ matrices $(K_n)_{n=1}^\infty$ defined by
\[
(K_n)_{i,j} = \frac{1}{v_{n,i}} \tilde{\mu}\left(  \bigcup_{(C_s,M)\in \mathcal{S}_{n,i}} \bigcup_{a\in \mathcal{A}_{i,j}}  (C_{s.[a]},M)   \right) = \frac{ \tilde{\mu}\left( \widetilde{T}^{-n}(\Omega,M_i) \cap \widetilde{T}^{-n-1}(\Omega,M_j)\right)}{\tilde{\mu}\left( \widetilde{T}^{-n} (\Omega,M_i) \right)}.
\]
It is clear by construction that these matrices are stochastic and that $\vec{v}_{n+1}=\vec{v}_n K_n$.

We let $K_{\ell,n}= K_{\ell+1}K_{\ell+2}\dots K_n$.

Unfortunately, the matrices $K_n$ are not all the same, so $K_{0,n}$ represents a  Markov chain that is time-inhomogeneous. However, they are not far from being time-homogenous. By writing the matrix coefficients in a different way and applying \eqref{eq:renyi}, we see that
\begin{align*}
(K_n)_{i,j}&= \frac{1}{v_{n,i}} \tilde{\mu}\left(  \bigcup_{(C_s,M)\in \mathcal{S}_{n,i}} \bigcup_{a\in \mathcal{A}_{i,j}}  (C_{s.[a]},M)   \right)\\
&= \frac{1}{v_{n,i}} \tilde{\mu}\left(\bigcup_{(C_s,M)\in \mathcal{S}_{n,i}}\left(C_s \cap T^{-n}\left(\bigcup_{a\in \mathcal{A}_{i,j}} C_{[a]}\right),M\right)\right) \\
&\asymp \frac{1}{v_{n,i}}  \tilde{\mu}\left(\bigcup_{(C_s,M)\in \mathcal{S}_{n,i}}\left(C_s,M\right)\right) \mu\left( \bigcup_{a\in \mathcal{A}_{i,j}} C_{[a]}\right)\\
&= \mu\left( \bigcup_{a\in \mathcal{A}_{i,j}} C_{[a]}\right),
\end{align*}
with the same implicit constant $\mathcal{C}$ as in \eqref{eq:renyi}. Thus $K_{n_1} \asymp K_{n_2}$ for any $n_1,n_2\in \mathbb{N}$ with uniform implicit constant $\mathcal{C}^2$. This also implies that for any fixed $L$, we have that $K_1^L \asymp K_{n_1L,(n_1+1)L} \asymp K_{n_2L,(n_2+1)L}$ uniformly for any $n_1,n_2\in \mathbb{N}$ with implicit constant $\mathcal{C}^{2L}$.

Since $\widetilde{T}$ is assumed to be transitive, we know that for any $M_i,M_k\in \mathcal{M}$, there exists a cylinder $(C_s,M_i)$ with $|s|=\ell$ such that $T^\ell(C_s,M_i)=(\Omega,M_j)$. This implies that $(K_1^\ell)_{i,j}\asymp (K_{0,\ell})_{i,j}>0$. In other words, $K_1$ is an irreducible matrix. Also, we should be able to find integers $\ell_1, \ell_2,\dots, \ell_{|\mathcal{M}|}\in \mathbb{N}$ so that $(K_1^{\ell_i})_{i,i}>0$. Since all terms of $K_1$ are non-negative by construction, if $(K_1^{\ell_i})_{i,i}>0$ then we have   $(K_1^{m\ell_i})_{i,i}>0$ for any $m\in \mathbb{N}$. Thus, if we let $L=\operatorname{lcm}(\ell_1,\ell_2,\dots,\ell_{|\mathcal{M}|})$, then we have that $K_1^L$ is strictly positive along its diagonal. 

Suppose $K_1^L$ is itself irreducible. Then since $K_1^L$ has non-negative coefficients with a strictly positive diagonal, there is some power of it such that every coefficient is strictly positive (see equation (8.3.5) on page 672 of  \cite{Meyer}). We may therefore apply Lemma \ref{lem:markovchain} to the sequence of matrices $\{K_{nL, (n+1)L}\}_{n=0}^\infty$. So there exists $c'\in (0,1)$ and $n'_0$ such that $v_{nL,i}\in (c',1-c')$ for all $i$ and all $n\ge n'_0$. 

Suppose $K_1^L$ is not irreducible. By Theorem 3.4.5 in \cite{BR}, since $K_1$ itself is irreducible, there exists a permutation matrix $P$ such that 
\begin{equation}\label{eq:permuted}
PK_1^L P^T = \left( \begin{array}{cccc} C_1 & 0 & \cdots & 0 \\ 0 & C_2 & \cdots & 0\\ \vdots & \vdots & \ddots & \vdots \\ 0 & 0 & \cdots & C_r \end{array} \right),
\end{equation}
where the $C_j$ are irreducible matrices. In this case, we would apply Lemma \ref{lem:markovchain} to each $C_j$ and thus show  that there exist $c'_j\in (0,1)$ and $n'_j$ such that $v_{nL,i}\in (c'_j,1-c'_j)$ for $n \ge n'_j$ and for indexes $i$ corresponding to the matrix $C_j$ after undoing the permutation. By taking $c'=\min\{c'_j\}$ and $n'_0 =\max\{n'_j\}$, we get the same result as in the previous paragraph.

No column of $K_1$ consists of all $0$'s (otherwise there would be an $M\in \mathcal{M}$ that is never visited, contrary to the transitivity of $\widetilde{T}$), therefore the sum of the coefficients in any column vector of $K_{nL,nL+j}$ for $j\le L$ is uniformly bounded from below. Thus, we can therefore find a constant $c$ and $n_0$ such that $v_{n,i} \in (c,1-c)$ for all $i$ and all $n \ge n_0$. In particular, $v_{n,i}\asymp 1$.

Now we can prove the desired statement \eqref{eq:bigrenyi}. It suffices to show the statement is true for $n\ge n_0$ and for sets of the form $(E,M_i)$ for some measurable subset $E\subset\Omega$ and $M_i\in \mathcal{M}$. In this case, we have that $\widetilde{T}^{-n}(E,M_i)$ equals the union of $((T^{-n}E)\cap C_s, M)$ for $(C_s,M)\in \mathcal{S}_{n,i}$ as defined above. Therefore, by applying \eqref{eq:renyi}, we have
\begin{align*}
\tilde{\mu} \left( \widetilde{T}^{-n}(E,M_i) \right) &= \sum_{(C_s,M)\in \mathcal{S}_{n,i}} \tilde{\mu}\left( (T^{-n}E)\cap C_s, M\right) = \sum_{(C_s,M)\in \mathcal{S}_{n,i}} \frac{1}{|\mathcal{M}|} \mu((T^{-n}E)\cap C_s)\\
&\asymp \mu(E)  \sum_{(C_s,M)\in \mathcal{S}_{n,i}} \frac{1}{|\mathcal{M}|} \mu(C_s) = \mu(E) v_{n,i} \asymp \mu(E)\\
&= |\mathcal{M}| \cdot \tilde{\mu}(E,M_i) \asymp \tilde{\mu}(E,M_i),
\end{align*}
as desired.
\end{proof}

\subsection{Proof of Theorem \ref{thm:traversing}}
First, we will show that $\widetilde{T}$ is ergodic (despite not necessarily being invariant) with respect to $\tilde{\mu}$.

Suppose, we have a $\widetilde{T}$-invariant subset of $\widetilde{\Omega}$ called $E$ that has non-zero $\tilde{\mu}$-measure. We define $E^c= \widetilde{\Omega}\setminus E$. By projecting $E$ onto the first coordinate, we see that the set
\[
\left\{x\in \Omega \mid \text{There exists }M \in \mathcal{M}\text{, with }(x, M) \in E\right\}
\]
must have full $\mu$-measure, since it is invariant under $T$ and $T$ is ergodic with respect to $\mu$. 

 Let $E_{M}$ denote the set of $x\in \Omega$ such that $(x, M) \in E$. We claim that 
\[
\tilde{\mu}(E_{M} ,M)  >0
\]
for all $M\in \mathcal{M}$. To show this, let $M$ be fixed and let $s$ be a traversing string, whose existence is guaranteed by Lemma \ref{lem:traversing}.  Since $E$ must project onto a full $\mu$-measure set in $\Omega$ as described earlier, there must exist at least one $M' \in \mathcal{M}$ with 
\[
 \tilde{\mu}\left(\left(C_{s} ,M' \right)\cap E\right) > 0.
\]
By the definition of being a traversing string, however, there exists some $i<|s|$ such that 
\[
 (C_{s},M')\subset \widetilde{T}^{-i}\left(\Omega ,M\right),
\]
which implies that
\[
0< \tilde{\mu}\left(\left(C_{s} ,M' \right)\cap E\right)  \le \tilde{\mu}\left(\widetilde{T}^{-i}\left(\Omega ,M \right)\cap E\right) = \tilde{\mu}\left(\widetilde{T}^{-i}\left((\Omega ,M) \cap E\right)\right).
\]
But since $T$ is non-singular---that is, the preimage of a null set is itself a null set---so must $\widetilde{T}$ be. This can be seen by projecting onto the first coordinate again. Therefore, 
\[
0< \tilde{\mu}\left((\Omega ,M )\cap E\right)= \tilde{\mu} \left( E_{M} ,M \right),
\]
as desired.

Now we wish to show that $E$ has a substantial intersection with every cylinder set on $\widetilde{\Omega}$, in particular, by showing that there exists a constant $\epsilon>0$ so that for all cylinder sets $(C_s,M)$, we have
\begin{equation}\label{eq:desired1}
\tilde{\mu} (E \cap (C_s,M)) \ge \epsilon \cdot \widetilde{\mu}(C_s,M).
\end{equation}
(There is no relation between the strings $s$ considered from here on and the traversing string considered earlier.)

 Since there are only finitely many elements in $\mathcal{M}$, there must exist $\epsilon'>0$, such that
$
 \tilde{\mu}(E_{M'}) \ge \epsilon'
$
for all $M'\in \mathcal{M}$. Let us now fix an arbitrary cylinder $(C_s,M)$ with $n:=|s|$, and let $M'$ be such that $\widetilde{T}^{n}(C_s,M)=(\Omega,M')$. By applying \eqref{eq:renyi}, we have
\begin{align*}
\tilde{\mu}(E\cap (C_s,M)) &= \tilde{\mu}\left(E\cap \left( \widetilde{T}^{-n}(\Omega,M') \right) \cap (C_s,M)\right)\\
&= \tilde{\mu}\left( (C_s,M) \cap \widetilde{T}^{-n}\left( E \cap (\Omega,M')\right) \right)\\
&= \tilde{\mu}\left( (C_s,M) \cap \widetilde{T}^{-n}(E_{M'},M')\right)\\
&= \frac{1}{|\mathcal{M}|} \mu\left(T^{-n}E_{M'} \cap C_s \right)\ge \frac{1}{\mathcal{C}|\mathcal{M}|} \mu(E_{M'}) \mu(C_s)\\
&\ge \frac{\epsilon'}{\mathcal{C}|\mathcal{M}|} \mu(C_s)= \frac{\epsilon'}{\mathcal{C}} \tilde{\mu}(C_s,M).
\end{align*}
Therefore, letting $\epsilon = \epsilon'/\mathcal{C}$ gives \eqref{eq:desired1}.

Since the cylinder sets generate the Borel sets on $\widetilde{\Omega}$, we can find, for any $\delta>0$, a set $E_{\delta}$ such that $\tilde{\mu}(E^c \triangle E_{\delta})<\delta$ and $E_{\delta}$ is a disjoint union of a finite number of cylinder sets. Therefore, by applying \eqref{eq:desired1}, we have
\begin{align*}
\tilde{\mu}\left( E \cap E^c \right) &=\tilde{\mu}\left( E \cap E_{\delta} \right) + O( \delta) \ge \epsilon \tilde{\mu}(E) \tilde{\mu}(E_{\delta}) + O(\delta)\\
&= \epsilon \tilde{\mu}(E) \tilde{\mu}(E^c) + O (\delta).
\end{align*}
But $\tilde{\mu}(E \cap E^c ) = 0$ and $\delta$ was an arbitrary positive number. Thus either $\tilde{\mu}(E)=0$ or $\tilde{\mu}(E^c)=0$. Since we know $E$ has positive measure, this therefore implies that $E$ must have full measure, and $\widetilde{T}$ is ergodic with respect to $\tilde{\mu}$.

We will now construct a measure $\rho$ that is absolutely continuous with respect to $\tilde{\mu}$ such that $\widetilde{T}$ is not only ergodic but also invariant with respect to $\rho$.

We define a sequence of measures $\rho_n$ on $\widetilde{\Omega}$ by
\begin{equation}\label{eq:rhon}
\rho_n(A) = \frac{1}{n} \sum_{k=0}^{n-1} \tilde{\mu}\left( \widetilde{T}^{-k} A\right)= \int_{\widetilde{\Omega}} \left( \frac{1}{n} \sum_{k=0}^{n-1} \chi_A(T^{k} t)\right) d\tilde{\mu}(t).
\end{equation}
By Lemma \ref{lem:bigrenyi}, we can show that
\[
\limsup_{n\to \infty} \frac{1}{n} \sum_{k=0}^{n-1} \tilde{\mu}\left( \widetilde{T}^{-k} E \right) \le \mathcal{D} \tilde{\mu}(E),
\]
for any measurable set $E$. Therefore, by a theorem of Ryll-Nardzewski (see page 683 of \cite{DS}), the integrand of \eqref{eq:rhon} converges pointwise  to a $L_1$ function $g_A$ almost everywhere, and since the integrand is dominated by $1$, the integrand must in fact converge uniformly to $g_A$ almost everywhere. Therefore, we may define $\rho(A)= \lim_{n\to \infty} \rho_n(A)$. The Vitali-Hahn-Saks theorem \cite{VHS} shows that $\rho$ is in fact a probability measure on $\widetilde{\Omega}$. Since $\rho_n(\widetilde{T}^{-1} E) = \rho_n(E) +O(1/n)$ for any measurable set $E$, we have that $\rho$ is preserved by $\widetilde{T}$. Likewise, by Lemma \ref{lem:bigrenyi} again, we can see that
\begin{equation}\label{eq:mutualcont}
\frac{\tilde{\mu}(E)}{\mathcal{D}} \le \rho_n(E) \le \mathcal{D} \tilde{\mu}(E).
\end{equation}
and thus the same is true if we replace $\rho_n$ by $\rho$.\footnote{This part of the proof draws heavily on techniques used in the proof of Theorem 5.3.5 in \cite{IG}.}

Thus it remains to show that if $x\in\Omega$ is CF-normal, then $(x,M)$ is $\widetilde{T}$-normal with respect to $\rho$ for any $M\in\mathcal{M}$.

So consider a point $x\in \Omega$ that is CF-normal. Then for every cylinder $C_s$ and every $M\in \mathcal{M}$, we have
\begin{align*}
\lim_{N\to \infty} \frac{1}{N}\#\{ 1\le n \le N \mid \widetilde{T}^n (x, M) \in (C_s,\mathcal{M})\} &=\lim_{N\to \infty} \frac{1}{N}\#\{ 1\le n \le N \mid T^n x \in C_s \} \\
&= \mu(C_s) .
\end{align*}
Thus, in particular, we have for any $M,M' \in \mathcal{M}$ 
\begin{align*}
&\limsup_{N\to \infty} \frac{1}{N}\#\{ 1\le n \le N \mid \widetilde{T}^n (x, M) \in(C_s,M')\}\\
 &\qquad\le \limsup_{N\to \infty} \frac{1}{N}\#\{ 1\le n \le N \mid \widetilde{T}^n (x,M) \in (C_s,\mathcal{M})\}\\
&\qquad= \mu(C_s)= \frac{1}{|\mathcal{M}|} \tilde{\mu}(C_s,M')\le \frac{\mathcal{D}}{|\mathcal{M}|} \rho(C_s,M').
\end{align*}
Thus by Lemma \ref{lem:PS}, the points $(x, M)$ for \emph{all} $M\in \mathcal{M}$ are $\widetilde{T}$-normal with respect to $\rho$. \qed

\begin{rem}\label{rem:mutualcont}
We may consider Lebesgue measure on $\widetilde{\Omega}$ to be the Lebesgue measure on $\Omega$ crossed with the counting measure on $\mathcal{M}$. Since the Lebesgue measure and Gauss measure on $\Omega$ are absolutely continuous with respect to one another, and since we have \eqref{eq:mutualcont}, we see that Lebesgue measure (on $\widetilde{\Omega}$) and $\rho$ are absolutely continuous with respect to one another as well.
\end{rem}

\section{Building a dynamical system}\label{sec:buildingdynam}

As we hinted at the start of Section \ref{sec:tildespace}, we would like to build a dynamical system $\widetilde{T}$ from $\Omega\times \mathcal{M}_D$ to itself by $\widetilde{T}(x,M)=(Tx,U([a_1],M))$. We will use this definition for $\widetilde{T}$ throughout the rest of the paper; however, it may turn out that this system is not transitive and thus Theorem \ref{thm:traversing} may not apply. Thus we will require the following definition to modify our dynamical system.

We will call a subset $\mathcal{M}_D' \subset \mathcal{M}_D$ a transitive component if the following conditions are satisfied:
\begin{enumerate}
\item For any proper string $s$, and any $M\in \mathcal{M}_D'$ we have that $U(s,M)\in \mathcal{M}_D'$.
\item For any $M,M'\in \mathcal{M}_D'$ there exists a proper string $s$ such that $U(s,M)=M'$.
\end{enumerate}
Note that any two distinct transitive components of $\mathcal{M}_D$ must have empty intersection.

With this definition the transformation $\widetilde{T}$ given by $\widetilde{T}(x,M)=(Tx,U([a],M))$ \emph{is} transitive on $\Omega \times \mathcal{M}_D'$ for any transitive component $\mathcal{M}_D'\subset \mathcal{M}_D$.

\begin{lem}\label{lem:transitivecomponent}
There exists at least one transitive component of $\mathcal{M}_D$. Moreover, there exists a string $s$ such that $U(s,M)$ is in a transitive component for any $M\in \mathcal{M}_D$ (although not necessarily always in the same transitive component).
\end{lem}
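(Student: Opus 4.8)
The plan is to build the transitive components by analyzing the directed graph $G$ on vertex set $\mathcal{M}_D$, with an edge from $M$ to $M'$ whenever $U([a],M) = M'$ for some $a \in \mathbb{N}$. Property (1) in the definition of a transitive component says the vertex set of the component is closed under following edges; property (2) says the component is strongly connected. So what we want is a strongly connected subset $S \subseteq \mathcal{M}_D$ with no edges leaving $S$; this is precisely a \emph{terminal strongly connected component} (a "sink" in the condensation of $G$). First I would observe that $G$ has finite vertex set and every vertex has out-degree at least one: given $M \in \mathcal{M}_D$ and any $j \in \mathbb{N}$, Lemma \ref{lem:transition} produces an $M' = U([j],M) \in \mathcal{M}_D$, so there is always an outgoing edge. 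A finite directed graph in which every vertex has an outgoing edge must contain a directed cycle, and following edges from any vertex one eventually enters such a cycle; more precisely, the condensation of $G$ is a finite directed acyclic graph, so it has at least one sink, and any sink pulls back to a terminal strongly connected component $\mathcal{M}_D' \subseteq \mathcal{M}_D$. This $\mathcal{M}_D'$ satisfies (2) by strong connectivity (a path in $G$ from $M$ to $M'$ corresponds, by concatenating the single-digit strings along its edges and using \eqref{eq:Udef}, to a proper string $s$ with $U(s,M) = M'$), and it satisfies (1) because no edge leaves the component and again \eqref{eq:Udef} lets us decompose $U(s,M)$ for an arbitrary string $s$ into a sequence of single-digit steps, each of which stays inside $\mathcal{M}_D'$. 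This proves the first assertion.

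For the second assertion I would use that the condensation of $G$ is a finite DAG, so from \emph{every} vertex $M \in \mathcal{M}_D$ there is a directed path reaching some sink, i.e.\ some terminal strongly connected component — hence some transitive component. (Different starting vertices may reach different sinks; that is why the statement allows $U(s,M)$ to land in different transitive components for different $M$.) The issue is that this path, and hence the string realizing it, depends on $M$, whereas the lemma asks for a single string $s$ that works for all $M \in \mathcal{M}_D$ simultaneously. To fix this I would enumerate $\mathcal{M}_D = \{M_1, \dots, M_N\}$ and argue inductively, exactly as in the proof of Lemma \ref{lem:traversing}: choose a string $s_1$ with $U(s_1, M_1)$ in a transitive component; since transitive components are closed under $U$ (property (1)), once we are in one we stay in one, so appending anything to $s_1$ keeps $U(\cdot, M_1)$ inside that component. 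Now look at $U(s_1, M_2)$; append a string $s_2$ driving it into a transitive component; then $U(s_1 s_2, M_1)$ and $U(s_1 s_2, M_2)$ are both in transitive components. Continuing through $M_3, \dots, M_N$, the concatenation $s = s_1 s_2 \cdots s_N$ has $U(s, M_i)$ in a transitive component for every $i$, which is what we want.

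I expect the main obstacle to be purely bookkeeping: carefully translating between the graph-theoretic language (edges, strong components, sinks) and the string/matrix language the paper actually uses, in particular checking that concatenation of single-digit strings along a path composes correctly under $U$ via \eqref{eq:Udef}, and that property (1) really does give the "absorbing" behavior that makes the inductive concatenation argument go through. There is no deep analytic content here — it is a finiteness argument about a finite digraph — but one must be slightly careful that the empty string is allowed (in case $M_i$ is already in a transitive component, take $s_i$ empty) and that a proper string of length zero still makes sense in the statements of properties (1) and (2); if the paper's conventions forbid the empty string, one instead notes that every transitive component, being strongly connected with an outgoing edge to itself, admits a nonempty string $s$ with $U(s,M) = M$ for any $M$ in it, and uses that as the "do nothing" move.
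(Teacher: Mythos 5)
Your proposal is correct and takes essentially the same approach as the paper: the same directed graph on $\mathcal{M}_D$, the same passage to strongly connected components and the condensation DAG, the same identification of sinks with transitive components, and the same inductive concatenation argument (borrowed from the proof of Lemma~\ref{lem:traversing}) to produce a single string that works for every starting matrix. Your extra care about empty strings is consistent with the paper, which explicitly permits empty $s_i$ when $U(s_1\cdots s_{i-1}, M_i)$ already lies in a transitive component.
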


\begin{proof}
Consider a directed graph $G$ whose vertices are matrices $M\in \mathcal{M}_D$ and which has an edge from $M_1$ to $M_2$ if there exists a $j\in \mathbb{N}$ such that $U([j],M_1)=M_2$. Note this graph has out-degree always at least $1$. A subgraph $G'$ of $G$ is said to be strongly connected if for any $M_1$, $M_2$ in $V(G')$, the vertex set of $G'$, there exists a path from $M_1$ to $M_2$ and vice-versa. We can partition $G$ into its strongly-connected components, which are the maximal strongly-connected subgraphs of $G$. (Note that if there is a vertex $M$ such that there is no path from $M$ to another vertex and back to itself, then $M$ is its own strongly-connected component.) Let us call these components $G_1, G_2, \dots, G_n$. Note that if $M\in V(G_i)$, then $V(G_i)$ consists of all vertices which are strongly-connected to $M$. 

Now let us consider another directed graph $\mathcal{G}$ whose vertices are $G_1,G_2,\dots,G_n$ and where there is an edge from $G_i$ to $G_j$ with $i\neq j$ if there exists an edge from some $M_i \in G_i$ to some $M_j\in G_j$ in the directed graph $G$. We do not let $\mathcal{G}$ contain an edge which goes from a vertex to itself. Note that if there is an edge from $G_i$ to $G_j$, then by the strong-connectivity of these components, there is a path from any $M_i\in G_i$ to any $M_j\in G_j$. We see that $\mathcal{G}$ cannot have any cycles, as otherwise it would be possible to find matrices in two different strongly-connected components that are strongly connected to one another, contradicting the maximality of these components. Thus $\mathcal{G}$ is acyclic.

Any finite acyclic directed graph must contain at least one sink. We claim that the set of vertices of any sink $G_i$ of $\mathcal{G}$ is a transitive component for $\mathcal{M}_D$. Let us fix a sink $G_i$ and let $\mathcal{M}'$ denote the matrices in $V(G_i)$. The first condition for being a transitive component is satisfied because if there existed a proper string $s$ and $M\in \mathcal{M}'$ such that $U(s,M)\not\in \mathcal{M}'$, then there would be at least one path from $G_i$ to another strongly-connected component, contradicting the assumption that $G_i$ is a sink. For the second condition, this follows from the fact that within any $G_i$ there is a path from any vertex to any other vertex and also to itself.

Now consider all the matrices that do not lie in a transitive component, let us call them $M_1, M_2,\dots, M_k$. Consider $M_1$ and suppose it is in $G_i$. As there must be a path from $G_i$ to a sink of $\mathcal{G}$, there exists a string $s_1$ such that $U(s_1,M_1)$ is in a transitive component. Now consider $U(s_1,M_2)$.  Regardless of what matrix in $\mathcal{M}_D$ the matrix $U(s_1,M_2)$ happens to be, there is, by the same argument, a string $s_2$ such that $U(s_2,U(s_1,M_2))=U(s_1s_2, M_2)$ is in a transitive component. (The string $s_2$ could be empty if $U(s_1,M_2)$ is already in a transitive component.) Likewise there is a string $s_3$ such that $U(s_1s_2s_3,M_3)$ is in a transitive component, and so on. The desired string $s$ is simply $s_1s_2s_3\dots s_k$.
\end{proof}

The second part of Lemma \ref{lem:transitivecomponent} allows us to reduce the cases of Theorem \ref{thm:main} yet further. By Remark \ref{rem:firstsimplification}, it suffices to prove Theorem \ref{thm:main} in the case where $x\in [0,1)$ is a CF-normal number and $M\in \mathcal{M}_D$ for some $D$. Let $s$ be a string satisfying Lemma \ref{lem:transitivecomponent}. If $x$ is CF-normal, its continued fraction expansion contains $s$. Thus there exists some $n$ such that $\widetilde{T}^n(x,M)\in \Omega\times \mathcal{M}'_D$, for some transitive component $\mathcal{M}'_D \subset \mathcal{M}_D$, and so
\[
Mx = R([a_1,a_2,\dots,a_n],M).\left( U([a_1,a_2,\dots,a_n],M) (T^n x )\right),
\]
with $U([a_1,a_2,\dots,a_n],M)\in \mathcal{M}_D'$. As noted in Remark \ref{rem:firstsimplification}, the action of $T$ and the action of strings both preserve CF-normality and CF-nonnormality, so it suffices to assume that $x\in[0,1)$ is CF-normal and that $M$ is in some transitive component $\mathcal{M}_D'$.

\begin{rem}
The dynamical system discussed above bears a non-trivial resemblance to the dynamical system on a skew-product studied by Fisher and Schmidt \cite{FS}, although there appears to be no direct overlap.
\end{rem}

\begin{lem}\label{lem:integrability}
 Let $\widetilde{T}$ and $\rho$ be the transformation and measure corresponding to some transitive component $\mathcal{M}_D'$, the latter of whose existence is gauranteed by Theorem \ref{thm:traversing}. 

Let $(C_{s_\ell},M_\ell)$ be a (possibly countable) sequence of cylinder sets  on $\widetilde{\Omega}=\Omega\times \mathcal{M}_D'$, such  that 
\begin{enumerate}
\item for any $x$ and $M$, we have $(x,M)\in (C_{s_\ell},M_\ell)$ for at most finitely many $\ell$; and,
\item for any $x$ and $M$, there exists a cylinder $(C_s,M)$ with $x\in (C_s,M)$ and such that no proper subcylinder $(C_{s'},M)\subset (C_s,M)$ is in the sequence $(C_{s_\ell},M_\ell)$.
\end{enumerate}
 Let $a_\ell>0$ be an associated sequence of positive real numbers so that the function
\[
f = \sum_{\ell} a_\ell \cdot 1_{(C_{s_\ell},M_\ell)} (\cdot)
\]
is bounded. Here $1_E(\cdot)$ is the standard indicator function of the set $E$.

Let $x\in[0,1)$ be CF-normal and $M\in \mathcal{M}_D$. Then
\[
\sum_{i=0}^{n-1} f(\widetilde{T}^i (x,M)) = n\cdot \left( \int_{\widetilde{\Omega}} f \ d\rho\right)(1+o(1)).
\]
\end{lem}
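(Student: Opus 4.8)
The plan is to deduce this from the $\widetilde{T}$-normality of $(x,M)$ with respect to $\rho$, which Theorem 1.11 grants once we observe that after finitely many steps the orbit of $(x,M)$ enters $\Omega\times\mathcal{M}_D'$ (using the second part of Lemma 1.14, or simply that the transitive component $\mathcal{M}_D'$ here is fixed and, by hypothesis, $M$ already lies in it, so $\widetilde{T}$ is transitive on $\widetilde{\Omega}$). The heart of the matter is that $f$ is a nonnegative bounded function, hence $\rho$-integrable, and the left-hand side is an ergodic-type average of $f$ along the $\widetilde{T}$-orbit of $(x,M)$; so the statement is a pointwise-ergodic statement for the \emph{particular} orbit of a normal point rather than for a.e.\ point. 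Since normality only directly gives us Birkhoff-type averages for indicator functions of cylinder sets (by the definition of $\widetilde{T}$-normality), the work is to pass from finite sums of such indicators to the countable sum defining $f$.

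First I would handle the finite truncations. For each $L$ set $f_L=\sum_{\ell\le L}a_\ell\,1_{(C_{s_\ell},M_\ell)}$. Because $(x,M)$ is $\widetilde{T}$-normal with respect to $\rho$, for each cylinder $(C_{s_\ell},M_\ell)$ we have $\frac1n\#\{0\le i<n:\widetilde{T}^i(x,M)\in(C_{s_\ell},M_\ell)\}\to\rho(C_{s_\ell},M_\ell)$, and summing the finitely many terms gives
\[
\frac1n\sum_{i=0}^{n-1}f_L(\widetilde{T}^i(x,M))\;\longrightarrow\;\int_{\widetilde{\Omega}}f_L\,d\rho
\qquad(n\to\infty).
\]
Since $0\le f_L\le f_{L+1}\le f$ and $f$ is bounded, monotone convergence gives $\int f_L\,d\rho\to\int f\,d\rho$ as $L\to\infty$; note $\int f\,d\rho<\infty$ because $f$ is bounded and $\rho$ is a probability measure. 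So it remains to control the tail error $\frac1n\sum_{i=0}^{n-1}(f-f_L)(\widetilde{T}^i(x,M))$ uniformly in $n$, which is where hypotheses (1) and (2) and the boundedness of $f$ enter.

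The key estimate is an upper bound for the Birkhoff sum of the nonnegative tail $g_L:=f-f_L=\sum_{\ell>L}a_\ell\,1_{(C_{s_\ell},M_\ell)}$. Hypothesis (2) supplies, for $\widetilde{\mu}$-a.e.\ point, a cylinder neighborhood meeting none of the $(C_{s_\ell},M_\ell)$ as a \emph{proper} subcylinder; together with hypothesis (1) this forces $g_L$ to be supported on a set of small $\widetilde{\mu}$-measure (equivalently small $\rho$-measure, by \eqref{eq:mutualcont} and Remark 1.13) for $L$ large, and moreover this support is a union of cylinder sets, so one can cover it by finitely many cylinders whose total $\widetilde{\mu}$-measure is $O(\varepsilon)$ and apply Lemma 1.8 / Lemma 1.12 together with the normality of $(x,M)$ to bound $\limsup_n\frac1n\sum_{i=0}^{n-1}1_{\mathrm{supp}\,g_L}(\widetilde{T}^i(x,M))$ by $O(\varepsilon)$; since $g_L\le\|f\|_\infty\,1_{\mathrm{supp}\,g_L}$, the contribution of the tail to the average is $O(\varepsilon\|f\|_\infty)$. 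Combining the three displays — $f_L$-average converges, $\int f_L\to\int f$, and the tail is uniformly $O(\varepsilon)$ — and letting first $n\to\infty$, then $L\to\infty$ (so $\varepsilon\to 0$), yields $\frac1n\sum_{i=0}^{n-1}f(\widetilde{T}^i(x,M))\to\int_{\widetilde{\Omega}}f\,d\rho$, which is exactly the asserted $n(1+o(1))\int f\,d\rho$.

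The main obstacle is the uniform-in-$n$ control of the tail: the normality hypothesis is an \emph{asymptotic} frequency statement, so a priori it says nothing about $\frac1n\sum_{i<n}1_{\mathrm{supp}\,g_L}(\widetilde T^i(x,M))$ for small or intermediate $n$. One must therefore either invoke the Pyatetski\u\i-Shapiro–type machinery behind Lemma 1.8 to get a genuine $\limsup$ bound valid for all large $n$ (and treat the finitely many small $n$ trivially, since they contribute $o(1)$ after division by $n$), or argue that $\mathrm{supp}\,g_L$ is itself, up to a $\widetilde\mu$-null set, a \emph{finite} union of cylinders — which is what hypotheses (1) and (2) are designed to guarantee — so that its indicator is a finite sum of cylinder indicators and the naive normality limit applies directly. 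I would pursue the latter route, as it keeps the argument within the elementary framework already set up and avoids re-deriving quantitative mixing.
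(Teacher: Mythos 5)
Your overall strategy---squeeze $f$ between an increasing sequence of finite truncations $f_L$ (for the lower bound) and something converging to $f$ from above whose Birkhoff averages are controlled by normality (for the upper bound)---is exactly the one the paper uses, and your treatment of the lower bound via $f_L=\sum_{\ell\le L}a_\ell 1_{(C_{s_\ell},M_\ell)}$ together with monotone convergence is correct and identical to the paper's $f_j^-$. You are also right that by hypothesis (1) the sets $S_L:=\bigcup_{\ell>L}(C_{s_\ell},M_\ell)$ decrease to $\emptyset$, so $\tilde{\mu}(S_L)\to 0$ and hence $\rho(S_L)\to 0$ by the mutual absolute continuity from Remark \ref{rem:mutualcont}.

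The gap is in your upper-bound step. You assert that one can ``cover \(S_L\) by finitely many cylinders whose total \(\tilde\mu\)-measure is \(O(\varepsilon)\),'' and as a fallback that ``\(\mathrm{supp}\,g_L\) is itself, up to a \(\tilde\mu\)-null set, a \emph{finite} union of cylinders.'' Neither is true in general. Take \(\mathcal{M}'_D=\{M\}\) and \(C_{s_\ell}=C_{[\ell]}\) for \(\ell\in\mathbb{N}\): hypotheses (1) and (2) hold (each point lies in exactly one \(C_{[k]}\), and that \(C_{[k]}\) serves as the constancy cylinder), yet \(S_L=\{x:a_1(x)>L\}\) is an infinite disjoint union of rank-one cylinders, is not a finite union of cylinders up to null sets, and admits no finite cover by proper cylinders of small total measure, since rank-one cylinders are pairwise disjoint and no proper cylinder contains two of them. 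What \emph{does} hold is that \(S_L\) is contained in a small \emph{anti-cylinder}: hypothesis (2) guarantees that the complement \(S_L^c\) is a countable union of cylinders (for \((x,M)\in S_L^c\), the cylinder from (2) cannot contain or be contained in any \((C_{s_\ell},M_\ell)\) with \(\ell>L\) without a contradiction, hence lies in \(S_L^c\)), so one can take a finite sub-union of cylinders of measure close to \(\tilde\mu(S_L^c)\), and its complement is an anti-cylinder of small measure containing \(S_L\). Normality then yields the Birkhoff estimate for anti-cylinder indicators by inclusion--exclusion. This is precisely the missing ingredient, and it is what the paper's proof supplies in a slightly different packaging: the paper introduces anti-cylinders explicitly, builds a universal refining sequence of finite partitions \(\mathcal{S}_j\) of \(\widetilde{\Omega}\) into cylinders and anti-cylinders (independent of the given family \((C_{s_\ell},M_\ell)\)), and defines the dominating step functions \(f_j^+=\sum_{S\in\mathcal{S}_j,\,M}\bigl(\sup_{(S,M)}f\bigr)1_{(S,M)}\), with hypothesis (2) used only to guarantee the pointwise convergence \(f_j^+\downarrow f\). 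So your plan can be repaired by replacing ``finitely many cylinders'' with ``finitely many cylinders and anti-cylinders,'' but as written the central claim fails.
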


\begin{proof}
From the second part of Theorem \ref{thm:traversing}, we have that for any cylinder set $(C_s,M)$ that
\[
\sum_{i=0}^{n-1} 1_{(C_s,M)}(\widetilde{T}^i (x,M)) = n\cdot \left( \int_{\widetilde{\Omega}} 1_{(C_s,M)} \ d\rho\right)(1+o(1)).
\]
Thus, if the sequence $(C_{s_\ell},M_\ell)$ is finite, then the result follows immediately, so we suppose it is countably infinite instead.

We will refer to $D$ as an anti-cylinder if it equals the difference of a cylinder set with a finite number of other cylinder sets. For any anti-cylinder $D$, we can write the indicator function $1_D$ as a finite sum and difference of indicator functions of cylinder sets, thus we have that
\[
\sum_{i=0}^{n-1} 1_{D}(\widetilde{T}^i (x,M)) = n\cdot \left( \int_{\widetilde{\Omega}} 1_{D} \ d\rho\right)(1+o(1)).
\]

Now let us consider two sequences of functions $f_j^+$ and $f_j^-$. We let 
\[
f_j^- =\sum_{\ell\le j} a_\ell \cdot 1_{(C_{s_\ell},M_\ell)} (\cdot)
\]
so that $f_j^- \le f$ and $f_j^- \le f_{j+1}^-$. To define $f_j^+$ we first define the set $\mathcal{S}_j$, which will be a parition of $\Omega$. We let $\mathcal{S}_0=\{\Omega\}$. We then define all the $\mathcal{S}_j$ recursively, deriving $\mathcal{S}_j$ by letting it be the coarsest refinement of $\mathcal{S}_{j-1}$ that includes all cylinder sets $C_s$ with the sum of digits in $s$ equal to $j$. Thus 
\begin{align*}
\mathcal{S}_1 &= \{\Omega \setminus C_{[1]}, C_{[1]}\},\\
\mathcal{S}_2 &= \{\Omega \setminus (C_{[1]}\cup C_{[2]}), C_{[2]},C_{[1]}\setminus C_{[1,1]},C_{[1,1]}\},
\end{align*}
and so on. Note that $\mathcal{S}_j$ always consists of just cylinders and anti-cylinders. For any $S\in \mathcal{S}_j$, we let $a_{(S,M)}= \sup_{ (x,M)\in (S,M)} f(x,M)$ and then define
\[
f_j^+ = \sum_{S\in \mathcal{S}_j, M\in \mathcal{M}'_D} a_{(S,M)}\cdot 1_{(S,M)}(\cdot).
\]
Note that $f_j^+ \ge f$ and $f_j^+\ge f_{j+1}^+$.

We in fact have that $f_j^-$ and $f_j^+$ converge pointwise to $f$ (and hence in norm by dominated convergence). For $f_j^-$ this follows from assumption $(1)$ that any $(x,M)\in \widetilde{\Omega}$ is in finitely many of the cylinders $(C_{s_\ell},M_\ell)$, and hence for sufficiently large $j$, $f_j^-(x,M)=f(x,M)$. For $f_j^+$, this follows from assumption $(2)$, as for any $(x,M)\in \widetilde{\Omega}$ there exists a cylinder set $(C_s,M)$ containing $(x,M)$ such that $f$ is constant on $(C_s,M)$, thus for large $j$ we have $f_j^+(x,M)=f(x,M)$.

Thus we have, for any $j\ge 0$
\begin{align*}
\sum_{i=0}^{n-1} f_j^-(\widetilde{T}^i (x,M)) &\le \sum_{i=0}^{n-1} f(\widetilde{T}^i (x,M))  \le \sum_{i=0}^{n-1} f_j^+(\widetilde{T}^i (x,M)) \\
n \cdot \left( \int_{\widetilde{\Omega}} f_j^- \ d\rho\right) (1+o(1)) &\le \sum_{i=0}^{n-1} f(\widetilde{T}^i (x,M))  \le n \cdot \left( \int_{\widetilde{\Omega}} f_j^+ \ d\rho\right) (1+o(1)).
\end{align*}
As both integrals above converge in norm to $\int_{\widetilde{\Omega}} f \ d\rho$ as $j$ tends to infinity, this completes the proof.
\end{proof}

\section{Trigger strings}\label{sec:triggerstring}

Consider three strings $ s_+,s, s_-$ and three matrices $M_+, M, M_- $ in some transitive component $\mathcal{M}'_D\subset \mathcal{M}_D$ such that $U(s_+,M_+)=M$ and $U(s,M)=M_-$. With these definitions, note that \[\widetilde{T}^{|s_+|}(C_{s_+s}, M_+) = (C_s, M) \qquad\text{ and } \qquad \widetilde{T}^{|s|}(C_{ss_-},M) = C_{s_-},M_-).\] In this way, we can think of going from $(s,M)$ to $(s_+s,M_+)$ as prepending digits to the start of $(s,M)$, and likewise we can think of going from $(s,M)$ to $(ss_-,M)$ as appending digits to the end of $(s,M)$.

If we denote the resultant strings of $(s_+,M_+), (s,M), (s_-,M_-)$ by $t_+,t,t_-$ respectively with $t=[d_0;d_1,d_2,\dots,d_m]$, then it is easy to see that the resultant string of $(s_+s,M_+) $ should equal $t_+t$ and in fact, all the digits of $t$ appear unchanged and in the same position (relative to the end of the string) except perhaps $d_0$ and $d_1$. We say that these digits appear in the same \emph{relative position}. Likewise $R(ss_-,M) =tt_-$ and all the digits of $t$ appear unchanged and in the same position (now relative to the \emph{front} of the string) except perhaps $d_{m-1}$ and $d_m$. Again we say these digits appear in the same relative position.

We say a substring $r$ \emph{appears nicely} within the resultant string $t$ if it starts after the first digit and ends before the second-to-last digit. Thus, no matter what is prepended or appended to $s$, $r$ will still appear in the corresponding resultant string $t_+tt_-$. Although $r$ could appear many times in this longer string, there is exactly one copy of $r$ that occupies the \emph{relative position} corresponding to the original copy of $r$ in $t$.

We can run this procedure in reverse as well. If we start with a very long string $\overline{s}$ and matrix $\overline{M}\in \mathcal{M}$, we could decompose $\overline{s}$ as $s_+ss_-$ with corresponding matrices $M_+,M,M_-$. (In this case, we would refer to $(s,M)$ as a substring of $(\overline{s},\overline{M})$.) We could obtain other decompositions by removing the first digit of $s$ and appending it to the end of $s_+$, and this would alter the head of the resultant string $t$ (as well as the matrix $M$). Similarly we could remove the last digit of $s$ and prepend it to $s_-$, altering the tail of $t$.

Let $\overline{t}=R(\overline{s},\overline{M})$. Suppose we have a proper string $r$ that appears nicely in $\overline{t}$. While there are potentially many ways of decomposing $\overline{s}=s_+ss_-$ so that $r$ appears nicely in $t$ corresponding to the relative position of the original copy of $r$ in $\overline{t}$, there is a unique way of doing this so that the length of $s$ is minimized. Namely, we remove digits from the head or tail of $s$ and append them to $s_+$ or prepend tem to $s_-$ until removing any more digits would cause $r$ to no longer appear nicely within $t$ at the same relative position. We refer to this $(s,M)$ with the length of $s$ minimized as a trigger string for $r$. If there are exactly $k$ copies of $r$ in $\overline{t}$ all of which have the same minimal decomposition, we say that $(s,M)$ is a trigger string of multiplicity $k$.  For example if the resultant string of $([10],M)$ was $[1;1,1,1,1,1]$ and the desired string $r=[1]$, then this string has multiplicity $2$.

From these definitions, we see that the total number of times $r$ appears in $\overline{t}$ is equal (up to $O(1)$) to the number of trigger strings $(s,M)$ counted with multiplicity that occur in $(\overline{s},\overline{M})$. The $O(1)$ counts those four possible positions where $r$ could appear  in $\overline{t}$ but not nicely. 

\section{Proof of Theorem \ref{thm:main}}\label{sec:proof}

As we have noted in Remark \ref{rem:firstsimplification} and Section \ref{sec:buildingdynam}, it suffices to prove Theorem \ref{thm:main} in the case where $x\in [0,1)$ is CF-normal and $M\in \mathcal{M}_D'$ where $\mathcal{M}_D'$ is some transitive component of $\mathcal{M}_D$.

Note that Theorem \ref{thm:traversing} applies to $\widetilde{T}$ acting on the set $\Omega \times \mathcal{M}_D'$, giving us an ergodic, invariant measure $\rho$ on this space, and $(x,M)$ is normal with respect to $\widetilde{T}$ and $\rho$.

For the first and largest step of the proof, we want to show that for any finite proper string $r$, this string appears in $Mx$ with a limiting frequency that does not depend on $x$. (However, we will resume throughout the proof that $M$ is a fixed matrix.) In particular, we want a constant $\rho_r$ such that
\begin{equation}\label{eq:rhor}
\lim_{m\to \infty} \frac{\#\{0\le i \le m : T^i (Mx\  (\operatorname{mod }1)) \in C_r\}}{m} = \rho_r,
\end{equation}
for all CF-normal $x\in [0,1)$.

Let $x=\langle a_1,a_2,a_3,\dots\rangle$, and let $Mx =\langle b_0;b_1,b_2,\dots \rangle$ so that 
\[
Mx = R([a_1,a_2,\dots,a_n],M). \left( U([a_1,a_2,\dots,a_n],M) (T^n x)\right).
\]
We let $\ell(n)$ denote the length of $R([a_1,a_2,\dots,a_n],M)$. The following two lemmas assume that $r$ and $\mathcal{M}'_D$ are fixed and that $x$ is any CF-normal number in $[0,1)$ and $M\in \mathcal{M}'_D$. We will also let $\widetilde{\Omega}= \Omega \times \mathcal{M}_D'$.

\begin{lem}\label{lem:constantone}
We have for some constant $c_1>0$
\[
\ell(n)=c_1 n(1+o(1)).
\]
\end{lem}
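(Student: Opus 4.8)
The plan is to express $\ell(n)$ as an ergodic sum over the orbit of $(x,M)$ under $\widetilde T$ and then apply the normality provided by Theorem \ref{thm:traversing} (in the robust form of Lemma \ref{lem:integrability}). First I would observe that $\ell(n)$ is built up one step at a time: by \eqref{eq:Rdef} the resultant string $R([a_1,\dots,a_n],M)$ is the concatenation $R([a_1,\dots,a_{n-1}],M).R([a_n],U([a_1,\dots,a_{n-1}],M))$, so the length increases at the $i$th step by $|R([a_i],U([a_1,\dots,a_{i-1}],M))|$, up to an $O(1)$ correction coming from the concatenation-reduction rules (cancellation of an $A_{c_n}A_{c_0'}$ or an $A_{c_{n-1}}JA_{c_n}A_{c_0'}JA_{c_1'}$ block). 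Writing $M_{i-1}:=U([a_1,\dots,a_{i-1}],M)$, which is exactly the second coordinate of $\widetilde T^{\,i-1}(x,M)$, the increment is a function of $(a_i, M_{i-1})$, hence a function $g$ of $\widetilde T^{\,i-1}(x,M)$ alone. Therefore
\[
\ell(n) = \sum_{i=1}^{n} g\!\left(\widetilde T^{\,i-1}(x,M)\right) + O(n)\cdot\varepsilon,
\]
where the error term needs to be handled carefully: the per-step reduction is $O(1)$ in size but could a priori accumulate linearly.

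The key point controlling the accumulation of cancellations is Lemma \ref{lem:contractingterms}: the only way the reduction can eat into previously-written digits is through a run of consecutive steps whose resultant strings are $[-1;1]$, and such runs have length bounded by a constant depending only on $D$. So between any two ``genuine'' contributions the net length lost is $O_D(1)$, and moreover the pairs $([j],M)$ with resultant string $[-1;1]$ are finite in number (first part of Lemma \ref{lem:contractingterms}). Consequently $g$ takes finitely many values (it is governed by $R([a_i],M_{i-1})$, whose length is $O_D(1)$ by Lemma \ref{lem:transitionlength}), so $g$ is a bounded function that is constant on rank-$1$ cylinders of $\widetilde\Omega$ — precisely the kind of function covered by Lemma \ref{lem:integrability} (indeed it is a finite sum $\sum a_\ell 1_{(C_{s_\ell},M_\ell)}$ over rank-$1$ cylinders, so hypotheses (1) and (2) there are trivially met). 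Applying Lemma \ref{lem:integrability} gives
\[
\ell(n) = n\left(\int_{\widetilde\Omega} g\, d\rho\right)(1+o(1)) + o(n),
\]
so it remains only to check that $c_1 := \int_{\widetilde\Omega} g\, d\rho$ is strictly positive.

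Positivity of $c_1$ is where I'd spend the real effort. One cannot have $g\equiv 0$ because that would force $R([a_i],M_{i-1})$ to be empty or $[0;\,]$ for a.e.\ choice — but if every resultant string were trivial then $Mx$ would, after pushing $M$ through all $n$ digits, still be $M_n(T^n x)$ with no digits emitted, which is impossible for large $n$ since $Mx$ is irrational with an infinite expansion (here I'd use that $\det M\neq 0$ so $Mx$ is genuinely irrational when $x$ is, and compare with the fact that the emitted digits must reconstruct the head of $Mx$'s expansion). More quantitatively: the length $\ell(n)$ is, up to $O_D(1)$, the number of partial quotients of $Mx$ determined by the first $n$ partial quotients of $x$, and since the denominators $q_n(x)$ of $x$'s convergents grow like a fixed exponential rate (Lévy's constant) while the denominators of $Mx$'s convergents grow at the same exponential rate, and the matrix $M$ changes denominators only by the bounded factor $|\det M|=D$, the number $\ell(n)$ must be comparable to $n$ — in particular bounded below by a positive multiple of $n$. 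Combining the lower bound $\ell(n)\gg_D n$ with the asymptotic $\ell(n)=c_1 n(1+o(1))$ forces $c_1>0$. The main obstacle, then, is not the ergodic-averaging step — that is essentially automatic from Lemma \ref{lem:integrability} — but rather rigorously bounding the accumulated concatenation cancellations via Lemma \ref{lem:contractingterms} and then pinning down $c_1>0$; I expect the cleanest route to the latter is the convergent-denominator growth-rate comparison just sketched, using that $M$ acts on denominators by a bounded multiplicative distortion.
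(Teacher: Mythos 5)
Your high-level plan --- rewrite $\ell(n)$ as a Birkhoff sum over $(\widetilde T^i(x,M))_{i<n}$ and invoke Lemma \ref{lem:integrability} --- is exactly the paper's strategy, and your instinct to reach for Lemmas \ref{lem:transitionlength} and \ref{lem:contractingterms} to control the pieces is correct. But there is a genuine gap in the central step. You set the per-step weight to be $|R([a_i],M_{i-1})|$, a function constant on rank-$1$ cylinders of $\widetilde\Omega$, and then assert that $\sum_{i\le n}|R([a_i],M_{i-1})|$ equals $\ell(n)$ up to an error that can be swept into the $1+o(1)$. That is false: the concatenation rule deletes exactly two digits at every boundary where the outgoing resultant ends in a $1$ and the incoming one begins with a $-1$, and for a CF-normal $x$ this pattern occurs at a fixed positive \emph{density} of indices $i$. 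Lemma \ref{lem:contractingterms} bounds the length of any single run of $[-1;1]$-resultants, but it says nothing about the \emph{number} of such runs over $n$ steps; so the discrepancy between your $\sum g$ and $\ell(n)$ is generically $\Theta(n)$, not $O(1)$, and is not absorbed by $o(1)$. The deeper issue is that the true per-step increment to $\ell$ is \emph{not} a function of $(a_i, M_{i-1})$ alone: whether a cancellation occurs depends on the next step as well, and (because the merged last digit can itself be $1$) can chain across several steps, so no function constant on rank-$1$ cylinders can represent the increment.

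The paper resolves this by abandoning rank-$1$ cylinders. It constructs a partition $\mathcal{S}$ of $\widetilde\Omega$ into cylinders $(C_s,M)$ of \emph{variable} rank --- a block $[c_1,\dots,c_j]$ is extended until it swallows an entire maximal run of $[-1;1]$-resultants (Lemma \ref{lem:contractingterms} guarantees such blocks have uniformly bounded rank) --- and assigns to each block a weight $g(s,M)$ equal to the net length contributed, constructed so that the Birkhoff sum telescopes to $\ell(n)+O(1)$. The resulting function $G=\sum_{(s,M)\in\mathcal{S}} g(s,M)\,1_{(C_s,M)}$ is bounded (by Lemmas \ref{lem:transitionlength} and \ref{lem:contractingterms}) and still falls under Lemma \ref{lem:integrability} because the cylinders of $\mathcal{S}$ are pairwise disjoint. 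That is the piece your argument needs and does not supply. On positivity of $c_1$, you take a more roundabout route (comparing growth rates of convergent denominators of $x$ and $Mx$); the paper's argument is one line: $G$ is nonnegative, strictly positive on some cylinder, and $\rho$ is mutually absolutely continuous with Lebesgue by Remark \ref{rem:mutualcont}, so $\int G\,d\rho>0$. Your denominator-growth argument could likely be made to work, but it imports machinery that the measure-theoretic observation renders unnecessary.
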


\begin{proof}
We note that if the  resultant strings of $([a_i], U([a_1,a_2,\dots,a_{i-1}],M))$, for $i\le n$, all have a zeroth digit that is non-negative, then we clearly have that $\ell(n)$ is the sum of the lengths of these resultant strings. We may call this the ``expected" length. However, if $R([a_i], U([a_1,a_2,\dots,a_{i-1}],M))=[*,1]$ and $R([a_{i+1}], U([a_1,a_2,\dots,a_{i}],M))=[-1;*]$, then some of the digits cancel, removing two digits from the expected string length. This exception has its own exception: we must be careful if the resultant strings of  $([a_i], U([a_1,a_2,\dots,a_{i-1}],M))$ and $([a_{i+1}], U([a_1,a_2,\dots,a_{i}],M))$  are both $[-1;1]$, as these will essentially annihilate one another.

With this in mind, let us consider a set $\mathcal{S}$ consisting of pairs $(s,M)$ where $s$ is a string and $M$ is in our transitive component $\mathcal{M}'_D$, along with a function $g$ that acts on these pairs such that the following holds true.
\begin{enumerate}
\item All pairs $([c_1],M)$ whose resultant string is $[-1;1]$ are in $\mathcal{S}$; and $g([c_1],M)=0$ here.
\item All pairs $([c_1],M)$ where the last digit of the resultant string is greater than $1$ are in $\mathcal{S}$; and $g([c_1],M)$ equals the length of the resultant string.
\item All pairs $([c_1,c_2,\dots,c_j],M)$ where the last digit of $R([c_1],M)$ is $1$, \[ R([c_i],U([c_1,c_2,\dots,c_{i-1}],M))=[-1;1] ,\qquad 2\le i \le j-1,\] and $R([c_j],U([c_1,\dots,c_{j-1}],M))\neq [-1;1]$ are all in $\mathcal{S}$; and $g$ of this pair is the length of $R([c_1,c_2,\dots,c_j],M)$ minus the length of $R([c_j],U([c_1,\dots,c_{j-1}],M))$.
\end{enumerate}
By Lemma \ref{lem:contractingterms}, we see that if $(s,M)\in \mathcal{S}$, then $s$ has bounded length. Moreover, the cylinder sets $(C_s,M)$ for $(s,M)\in \mathcal{S}$ form a partition of $\widetilde{\Omega}$.

Now let us have a function $G$ on $\widetilde{\Omega}$ defined so that $G(x,M)=g(s,M)$ if $x\in C_s$ for some $s$ with $(s,M) \in \mathcal{S}$. By Lemmas \ref{lem:transitionlength} and \ref{lem:contractingterms}, we see that $G$ is uniformly bounded. Note that
\[
G = \sum_{(s,M)\in \mathcal{S}} g(s,M) \cdot 1_{(C_s,M)}(\cdot)
\]
and that $G(x,M)>0$ for some $(x,M)$.

From the above definitions we have that $\ell(n) = \sum_{i=0}^{n-1} G(\widetilde{T}(x,M)) +O(1)$. The $O(1)$ term accounts for small oddities that may occur near the beginning or end of the resultant string $R([a_1,a_2,\dots,a_n],M)$. The conditions of Lemma \ref{lem:integrability} hold (trivially so, as none of the cylinders in $\mathcal{S}$ overlap), and so $\ell(n)=n \cdot \left(\int_{\widetilde{\Omega}} G \ d{\rho} \right)\left(1+o(1)\right)$. Since $\rho$ is absolutely continuous with respect to the Lebesgue measure on $\widetilde{\Omega}$ and vice-versa by Remark \ref{rem:mutualcont}, this completes the proof.
\end{proof}

\begin{lem}\label{lem:constanttwo}
We have for some constant $c_r>0$,
\[
\#\{0 \le i \le \ell(n): T^i (Mx\ (\operatorname{mod }1)) \in C_r\}=c_r n(1+o(1)).
\]
\end{lem}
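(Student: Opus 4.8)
The idea is to count occurrences of $r$ in the tail of $Mx$ by counting trigger strings, using Lemma \ref{lem:integrability} to get the limiting frequency. By the discussion in Section \ref{sec:triggerstring}, the number of times the proper string $r$ appears in the resultant string $\overline{t}=R([a_1,\dots,a_n],M)$ equals, up to an $O(1)$ error, the number of trigger strings $(s,M')$ for $r$ — counted with multiplicity — that occur as substrings of $([a_1,\dots,a_n],M)$. So first I would set up the collection $\mathcal{S}_r$ of all pairs $(s,M')$ with $M'\in\mathcal{M}'_D$ that are trigger strings for $r$, together with their multiplicities $k_{(s,M')}$, and define $a_{(s,M')}=k_{(s,M')}$. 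The function $f_r = \sum_{(s,M')\in\mathcal{S}_r} k_{(s,M')}\cdot 1_{(C_s,M')}(\cdot)$ then has the property that $\sum_{i=0}^{n-1} f_r(\widetilde{T}^i(x,M)) $ counts exactly the trigger strings (with multiplicity) occurring in $([a_1,\dots,a_n],M)$, hence equals $\#\{0\le i\le \ell(n): T^i(Mx\ (\operatorname{mod}1))\in C_r\} + O(1)$.

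\textbf{Verifying the hypotheses of Lemma \ref{lem:integrability}.} The core work is checking the two conditions and boundedness of $f_r$. Boundedness: a trigger string $(s,M')$ for $r$ has $|s|$ bounded in terms of $D$ and $|r|$. Indeed, by Lemma \ref{lem:transitionlength} each digit $a_i$ contributes a bounded number of digits to the resultant string, and by Lemma \ref{lem:contractingterms} one cannot have too many consecutive digits whose resultant string is $[-1;1]$; since the minimal $s$ in a trigger string is characterized by the property that $r$ appears nicely in $R(s,M')$ using essentially all of $s$, this forces $\ell$-of-$s$ to be $\asymp |r|$ and hence $|s| = O_{D,r}(1)$. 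Thus only finitely many pairs $(s,M')$ are trigger strings for $r$, the multiplicities $k_{(s,M')}$ are bounded, and $f_r$ is a finite sum of bounded-height indicators, hence bounded. Finiteness of $\mathcal{S}_r$ also makes conditions (1) and (2) of Lemma \ref{lem:integrability} trivial: each $(x,M)$ lies in at most finitely many $(C_{s_\ell},M_\ell)$ automatically, and for condition (2), taking any cylinder $(C_s,M)$ with $|s|$ exceeding the (bounded) length of every trigger string for $r$ ensures no \emph{proper} subcylinder of it — equivalently no longer string — is a trigger string, since trigger strings are precisely the minimal-length decompositions and have bounded length. (One should take a little care that "proper subcylinder" in condition (2) means a longer string; a cylinder strictly longer than every trigger string works.)

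\textbf{Conclusion and the main obstacle.} Applying Lemma \ref{lem:integrability} to $f_r$ gives
\[
\sum_{i=0}^{n-1} f_r(\widetilde{T}^i(x,M)) = n\cdot\left(\int_{\widetilde{\Omega}} f_r\ d\rho\right)(1+o(1)),
\]
and since $\rho$ is mutually absolutely continuous with Lebesgue measure on $\widetilde{\Omega}$ (Remark \ref{rem:mutualcont}) and $f_r \ge 0$ is positive on a set of positive measure (for instance, some short string genuinely triggers $r$), the integral is a positive constant, which we name $c_r>0$. Combining with the $O(1)$ discrepancy between this sum and the count of $r$ in the initial block of $Mx$ yields $\#\{0\le i\le \ell(n): T^i(Mx\ (\operatorname{mod}1))\in C_r\} = c_r n(1+o(1))$, as claimed. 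The main obstacle is the bookkeeping around trigger strings: one must argue carefully that the count of nice occurrences of $r$ in $\overline{t}$ really is captured by a \emph{clopen} (cylinder) function on $\widetilde{\Omega}$ — i.e.\ that whether a given position in $(\overline{s},\overline{M})$ initiates a trigger string of a given multiplicity depends only on finitely many digits of $x$ — and that the boundary occurrences (the $O(1)$ positions where $r$ appears in $\overline{t}$ but not nicely, near the head or tail) really do contribute only $O(1)$ and not something growing with $n$. Both follow from Lemmas \ref{lem:transitionlength} and \ref{lem:contractingterms}, but assembling them into a clean statement is the delicate part.
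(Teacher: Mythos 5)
Your overall plan — count trigger strings via Lemma \ref{lem:integrability} and use the mutual absolute continuity from Remark \ref{rem:mutualcont} to get $c_r>0$ — is the right framework, and it matches the structure of the paper's proof. But there is a genuine gap at a step the paper explicitly flags as unresolved. You assert that trigger strings for $r$ have length $|s|=O_{D,r}(1)$, which would make $f_r$ a finite sum and render conditions (1) and (2) of Lemma \ref{lem:integrability} trivial. The paper's proof says, in so many words, ``We do not know if the trigger strings have bounded length (although we strongly suspect this is the case), so we must be a bit more careful in our proof.'' Your justification — ``by Lemma \ref{lem:transitionlength} each digit contributes a bounded number of digits, by Lemma \ref{lem:contractingterms} one cannot have too many consecutive $[-1;1]$ resultants, and since $r$ appears nicely using essentially all of $s$ this forces $|R(s,M')|\asymp|r|$ and hence $|s|=O_{D,r}(1)$'' — is an assertion, not a proof. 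The minimality of a trigger string does force the appearance of $r$ in $R(s,M')$ to begin within $O_D(1)$ digits of the front and end within $O_D(1)$ digits of the back, so $|R(s,M')|\le |r|+O_D(1)$; but the step from ``$|R(s,M')|$ bounded'' to ``$|s|$ bounded'' does not follow from Lemmas \ref{lem:transitionlength} and \ref{lem:contractingterms}. Those control, respectively, how many digits a single $c_i$ contributes and how long a run of $[-1;1]$ cancellations can be; they do not prevent the resultant string from staying short (while accumulating very large partial quotients outside the window where $r$ sits) as $|s|$ grows. One could try to close this by a Fibonacci-growth comparison as in the proof of Lemma \ref{lem:contractingterms}, but there the argument works because $M'\in\mathcal{M}_D$ has \emph{bounded entries}, whereas here $M_{R(s,M')}$ has entries controlled only by the \emph{digits} of $R(s,M')$, which you have not bounded.

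Without bounded trigger strings, the hypotheses of Lemma \ref{lem:integrability} are not verified: condition (2) requires a cylinder $(C_s,M)$ around $(x,M)$ none of whose proper subcylinders are trigger-string cylinders, which could fail if arbitrarily long trigger strings exist. The paper sidesteps this precisely by \emph{not} claiming finiteness. It orders the trigger-string cylinders, defines an undercounting function $F_j^-$ (the finite truncation) and an overcounting function $F_j^+$ (equal to a uniform cap $K$ on the tail), and squeezes. The crucial ingredient making this work is the uniform bound $K=2+\max_{c,M'}|R([c],M')|$ on the number of trigger-string cylinders containing any single point — a consequence of the same minimality argument you invoke, but used for a weaker, provable conclusion. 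Both $F_j^\pm$ satisfy the hypotheses of Lemma \ref{lem:integrability} (they are built from finitely many, respectively disjointly chosen, cylinders), and both converge in $L^1$ to the true counting function by dominated convergence against the constant $K$. You should adopt this squeeze rather than rely on the unproven boundedness of trigger strings; alternatively, if you believe the boundedness, you need to actually prove it, and the tools at hand do not obviously do so.
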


\begin{proof}
By Section \ref{sec:triggerstring}, we see that $\#\{0 \le i \le \ell(n): T^i (Mx\ (\operatorname{mod }1)) \in C_r\}$ equals the number of trigger strings for $r$ (counted with multiplicity) that are substrings of $[a_1,a_2,\dots,a_n]$, up to $O(1)$ to account for those $r$ that may not appear nicely within the resultant string. If the trigger strings had uniformly bounded length, this would be  equal to the number of times $\widetilde{T}^i (x,M)$ lands in a cylinder set $(C_s,M')$ corresponding to a trigger string $(s,M')$ counted with multipllicity. Again this would be up to $O(1)$ to account for $\widetilde{T}^i (x,M)$ landing in cylinder sets that make use of digits of $x$ beyond the $n$th digit. We do not know if the trigger strings have bounded length (although we strongly suspect this is the case), so we must be a bit more careful in our proof, using a sequence of functions $F_j^-$ and $F_j^+$ which will undercount or overcount the number of appearances of the trigger strings. 

As there are countably many strings of finite length, there are countably many trigger strings for $r$, so let us order them as $(s_i,M_i)$ in such a way so that if $(C_{s_{i'}},M_{i'})\subset (C_{s_i},M_{i})$ then $i'\ge i$.

Define a function $F_j^-$ on $\widetilde{\Omega}$ such that $F_j^-(x,M)$ counts the number of trigger strings $(s_i,M_i)$ (with multiplicity) with $i\le j$ such that $x\in C_{s_i}$ and $M=M_i$. If we let $k_i$ denote the multiplicity of $(C_{s_i},M_i)$, then we have
\[
F_j^- = \sum_{i\le j} k_i \cdot 1_{(C_{s_i},M_i)}.
\]

From our discussion in the first paragraph of the proof, we see that \[
\sum_{i\le n} F_j^- \left(\widetilde{T}^i (x,M)\right) +O_j(1)\le
\#\{0 \le i \le \ell(n): T^i (Mx) \in C_r\}.
\] 
The function $F_j^-$ clearly satisfies the conditions for Lemma \ref{lem:integrability} as it is a finite sum, so
\begin{equation}\label{eq:undercount}
n\cdot\left( \int_{\widetilde{\Omega}} F_j^- \ d\rho\right) (1+o(1))\le
\#\{0 \le i \le \ell(n): T^i (Mx) \in C_r\}
\end{equation}
for fixed $j$.

If $(s,M)$ is a trigger string for $r$ and the first digit of $s$ is $c$, then the first digit for this appearance of $r$ must be within the first $k+2$ digits of $R(s,M)$, where $k$ is the length of $R([c],M)$; otherwise we could remove $c$ from the start of $s$, and $r$ would still appear nicely within the shortened resultant string in the same relative position, contradicting the minimality condition for trigger strings.  It is possible that if $(s_i,M_i)$ is a trigger string for $r$, then there could be a string $s_-$ so that $(s_is_-,M_i)$ is also a trigger string for $r$, but these two appearances of $r$ cannot start at the same place (relative to the start of the resultant string). Therefore, if we let 
\[
K=2+\max_{c\in \mathbb{N},M'\in \mathcal{M}'_D} \left| R([c],M') \right|,
\]
and by Lemma \ref{lem:transitionlength}, the maximum here really does exist, then any given point in $\widetilde{\Omega}$ can lie in at most $K$ cylinders $(C_{s_i},M_i)$  corresponding to trigger strings $(s_i,M_i)$ (counted, as usual, with multiplicity).

We then define another function $F_j^+$ on $\widetilde{\Omega}$ by 
\[
F_j^+=\begin{cases}
K, &\text{if }(x,M)\in (C_{s_i},M_i)\text{ for some }i>j,\\
F_j^-(x,M), & \text{otherwise.}
\end{cases}
\]
Now we have that $ \#\{0 \le i \le \ell(n): T^i (Mx) \in C_r\}$ is less than the number of times $\widetilde{T}^i(x,M)$ is in a cylinder set corresponding to a trigger string (counted with multiplicity) and this in turn is less than 
\[
 \sum_{i\le n} F_j^+ \left(\widetilde{T}^i (x,M)\right) 
\]

We can write $F_j^+$ another way, as
\[
F_j^+=F_j^- + \sideset{}{^*}\sum_{i>j} K_i \cdot 1_{(C_{s_i},M_i)}
\]
where the starred sum runs over all $i>j$ such that $(C_{s_i},M_i)$ is not a subset of any other $(C_{s_{i'}},M_{i'})$ with $i'>j$, and $K_i$ is equal to $K$ minus the value of $F_j^- $ on this cylinder set. These $K_i$ are well-defined because of the ordering we chose for the trigger strings.  Since the starred sum is over a disjoint union of cylinder sets, the function $F_j^+$ also satisfies the conditions for Lemma \ref{lem:integrability}. Therefore 
\begin{equation}\label{eq:overcount}
\#\{0 \le i \le \ell(n): T^i (Mx) \in C_r\} \le n \cdot \left( \int_{\widetilde{\Omega}} F_j^+ \ d\rho\right)(1+o(1))
\end{equation}
for fixed $j$.

Since any point in $\widetilde{\Omega}$ appears in a uniformly bounded number of cylinder sets, the functions $F_{j}^-$ and $F_{j}^+$ converge pointwise as $j$ tends to infinity. And since both functions are uniformly bounded by the constant function $K$, they must converge in norm as well by the dominated convergence theorem. Combining this with \eqref{eq:undercount} and \eqref{eq:overcount} gives the desired result. 
\end{proof}

From here we are nearly done. First, let $\ell^{-1}(m):= \max\{n: \ell(n)\le m\}$. By Lemma \ref{lem:constantone}, $\ell^{-1}(m) = m/c (1+o(1))$, so therefore we have that \[ 0\le m - \ell(\ell^{-1}(m)) \le o(m)\]

Thus we have that
\[
\lim_{m\to \infty}  \frac{\#\{0 \le i \le m: T^i (Nx) \in C_r\}}{m} = \lim_{m\to \infty} \frac{\#\{0 \le i \le \ell(\ell^{-1}(m)): T^i (Nx) \in C_r\}+o(m)}{\ell(\ell^{-1}(m))+o(m)} 
\]
and so \eqref{eq:rhor} follows from Lemma \ref{lem:constantone} and \ref{lem:constanttwo}.

Now consider the sets
\[
E_M = \{Mx: x\in [0,1)\text{ is CF-normal}\} \text{ and } E = \{x: x\in \mathbb{R} \text{ is CF-normal}\}.
\]
We have shown that for any string $r$ there exists a constant $\rho_r$ such that for all $y\in E_M$, the string $r$ appears in the continued fraction expansion of $y$ with limiting frequency $\rho_r$, even though we do not know what any of these constants $\rho_r$ equal. On the other hand, for all strings $r$ and all $x\in E$, the limiting frequency of $r$ in the continued fraction expansion of $x$ is $\mu(C_r)$. Thus, either $\rho_r=\mu(C_r)$ for all $r$ and $E_M$ is a subset of $E$ or $\rho_r\neq \mu(C_r)$ for some $r$ and $E_M$ is disjoint from $E$. 

However, $E$ has full Lebesgue measure and $E_M$, being a non-trivial linear fractional transformation of a positive measure set, has positive measure, so $E_M$ must be a subset of $E$, and the theorem is proved.

\section{Further questions}

In one---admittedly peculiar---sense, the generalization that we have proved of Wall's result is not the natural generalization to make. What makes rational numbers so nice for any base $b$, is that they are eventually periodic. So one could ask the following.

Suppose $x$ is CF-normal and $q$ and $r$ have eventually periodic continued fraction expan\-sions---that is, they are both quadratic irrationals---with $q\neq 0$. Must it be true that $qx+r$ is CF-normal as well?

Also, while Theorem \ref{thm:main} solves Bugeaud's problem, it does not solve Mend\`{e}s France's problem. CF-normality is a much stronger condition than CF-simple normality, and our proof relies crucially on full CF-normality. So we ask, as Mend\`{e}s France did: does non-zero rational multiplication and rational addition preserve CF-simple normality?

\section{Acknowledgments}

The author would like to thank Justin Moore for asking a thought-provoking question on \url{mathoverflow.net} regarding the effect adding $1/2$ has on a continued fraction expansion, Bill Mance for bringing Bugeaud's question to his attention, and Cor Kraaikamp for pointing the author to the work of Liardet and Stambul.

The author acknowledges assistance from the Research and Training Group grant DMS-1344994 funded by the National Science Foundation.

\bibliographystyle{amsplain}

\end{document}